\DeclareMathOperator{\ann}{ann}
\DeclareMathOperator{\Ass}{Ass}
\DeclareMathOperator{\codim}{codim}
\DeclareMathOperator{\coker}{Coker}
\DeclareMathOperator{\depth}{depth}
\DeclareMathOperator{\Ext}{Ext}
\DeclareMathOperator{\gdim}{G-dim}
\DeclareMathOperator{\Hom}{Hom}
\DeclareMathOperator{\id}{id}
\DeclareMathOperator{\Image}{Image}
\DeclareMathOperator{\injdim}{injdim}
\DeclareMathOperator{\pd}{pd}
\DeclareMathOperator{\projdim}{projdim}
\DeclareMathOperator{\rank}{rank}
\DeclareMathOperator{\Spec}{Spec}
\DeclareMathOperator{\Supp}{Supp}
\DeclareMathOperator{\type}{type}
\renewcommand{\ge}{\geqslant}
\renewcommand{\le}{\leqslant}
\newcommand{\fm}{\mathfrak{m}}
\newcommand{\fp}{\mathfrak{p}}
\renewcommand{\iff}{if and only if }
\theoremstyle{plain}
\newtheorem{theorem}{Theorem}[section]
\newtheorem{proposition}[theorem]{Proposition}
\newtheorem{corollary}[theorem]{Corollary}
\newenvironment{customtheorem}[1]
  {\innercustomtheorem}
  {\endinnercustomtheorem}
\theoremstyle{definition}
\newtheorem{conjecture}[theorem]{Conjecture}
\newtheorem{example}[theorem]{Example}
\newtheorem{para}[theorem]{}
\newtheorem{question}[theorem]{Question}
\newtheorem{setup}[theorem]{Setup}
\theoremstyle{remark}
\newtheorem{remark}[theorem]{Remark}
\numberwithin{equation}{section}
\title[Auslander-Reiten conjecture and injective dimension of Hom]{Auslander-Reiten conjecture and finite injective dimension of Hom}
\author[Dipankar Ghosh]{Dipankar Ghosh}
\address{Department of Mathematics, Indian Institute of Technology Kharagpur, West Bengal - 721302, India}
\email{dipankar@maths.iitkgp.ac.in, dipug23@gmail.com}
\author[Ryo Takahashi]{Ryo Takahashi}
\address{Graduate School of Mathematics, Nagoya University, Furocho, Chikusaku, Nagoya 464-8602, Japan}
\email{takahashi@math.nagoya-u.ac.jp}
\date{September 2, 2021}
\subjclass[2010]{Primary 13D07; Secondary 13D05, 13H10} %; Secondary 13D02
\keywords{Auslander-Reiten conjecture; Vanishing of Ext; Injective dimension; Gorenstein rings}
\begin{document}

\pagenumbering{arabic}
\thispagestyle{empty}
  
 \begin{abstract}
 	For a finitely generated module $ M $ over a commutative Noetherian ring $R$, we settle the Auslander-Reiten conjecture when at least one of $\Hom_R(M,R)$ and $\Hom_R(M,M)$ has finite injective dimension. A number of new characterizations of Gorenstein local rings are also obtained in terms of vanishing of certain Ext and finite injective dimension of Hom.
 \end{abstract}
\maketitle
\section{Introduction}
	
	\begin{setup}\label{setup}
		Unless otherwise specified, $R$ is a commutative Noetherian local ring of dimension $ d $. All $R$-modules are assumed to be finitely generated.
%		We use the abbreviations CM and MCM for `Cohen-Macaulay' and `maximal Cohen-Macaulay' respectively.
	\end{setup}
	
	The vanishing of Ext modules, and their consequences are actively studied subjects in commutative algebra. A bunch of criteria for a given module to be projective, and criteria for a local ring to be Gorenstein have been described in terms of the vanishing of Ext. The purpose of this article is to provide such criteria in terms of properties of $\Hom_R(M,N)$ for $R$-modules $M$ and $N$ when the vanishing of $\Ext_R^{1 \le i \le n}(M,N)$ is given for some $n\ge 1$. We prove the following results.
	
	\begin{customtheorem}{\ref{thm:Hom-injdim-finite-consequence}}\label{thm:main-1}
		Let $M$ and $N$ be nonzero $R$-modules such that $\depth(N)=d$ and $\Ext_R^i(M,N)=0$ for all $1 \le i \le d$. Then $\Hom_R(M,N)$ has finite injective dimension \iff $M$ is free and $N$ has finite injective dimension.
	\end{customtheorem}
	
	\begin{customtheorem}{\ref{thm:main-2}}
		Let $M$ be a nonzero $R$-module such that $ \Ext_R^i(M,R) = 0$ and $ \Ext_R^j(M,M) = 0 $		for all $ 1 \le i \le 2d+1 $ and $ 1 \le j \le \max\{1, d-1\} $. If $\Hom_R(M,M)$ has finite injective dimension, then $M$ is free, and $R$ is Gorenstein.
	\end{customtheorem}
	
	One of the most celebrated long-standing conjectures in commutative algebra is the Auslander-Reiten conjecture:
	
	\begin{conjecture}\cite{AR75}
		For an $R$-module $M$, if $\Ext_R^i(M, M\oplus R) = 0$ for all $i\ge 1$, then $M$ is projective.
	\end{conjecture}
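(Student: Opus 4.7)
The Auslander-Reiten conjecture is a major open problem, so my plan is not to resolve it in full generality but to establish the partial form announced in the abstract: that it holds whenever at least one of $\Hom_R(M,R)$ and $\Hom_R(M,M)$ has finite injective dimension. The key observation is that the AR-hypothesis $\Ext_R^i(M, M \oplus R) = 0$ for all $i \geq 1$ supplies both $\Ext_R^i(M,R) = 0$ and $\Ext_R^i(M,M) = 0$ for every $i \geq 1$, which is far stronger than the finite-range vanishing required in Theorems \ref{thm:main-1} and \ref{thm:main-2}. So the work is really about assembling the right pieces.

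First I would handle the case that $\Hom_R(M,M)$ has finite injective dimension. Here Theorem \ref{thm:main-2} applies directly: its $\Ext$-vanishing hypotheses ($1 \le i \le 2d+1$ on $(M,R)$ and $1\le j\le \max\{1,d-1\}$ on $(M,M)$) are trivially implied by the AR-hypothesis, so Theorem \ref{thm:main-2} immediately delivers that $M$ is free and $R$ is Gorenstein.

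For the case that $\Hom_R(M,R)$ has finite injective dimension, my plan is to reduce to Theorem \ref{thm:main-1} with $N=R$. The vanishing $\Ext_R^i(M,R)=0$ for $1\le i \le d$ is in hand; the subtlety is Theorem \ref{thm:main-1}'s requirement $\depth(R)=d$, i.e., that $R$ be Cohen-Macaulay. I would extract this from finite injective dimension of $\Hom_R(M,R)$ itself, via the Peskine-Szpiro/Roberts resolution of Bass's conjecture: any finitely generated module of finite injective dimension forces $R$ to be Cohen-Macaulay. Once this is known, Theorem \ref{thm:main-1} applied to $(M,R)$ gives $M$ free and $R$ of finite injective dimension, hence Gorenstein.

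The main obstacle in the second case is ensuring that $\Hom_R(M,R)$ is nonzero so that Bass's conjecture can be invoked; this should follow from the AR-hypothesis via a rank argument after localizing at an associated prime of $R$ (if $\Hom_R(M,R)=0$ then $M$ is torsion in a strong sense, which together with $\Ext^i_R(M,R)=0$ for all $i$ should force $M=0$, contradicting the nontriviality). A secondary subtlety is reconciling any depth-drop issues if one needs to pass modulo an $R$-regular sequence, since finite injective dimension of $\Hom$ behaves well under such reductions but the AR-vanishing must be tracked carefully through the quotient.
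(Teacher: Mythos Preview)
Your plan is correct and matches the paper's route: after localizing to the local case, the $\Hom_R(M,M)$ branch is exactly Corollary~\ref{cor:ARC} (apply Theorem~\ref{thm:main-2}), and the $\Hom_R(M,R)$ branch is exactly Corollary~\ref{cor:M-star-ARC} via Corollary~\ref{cor:injdim-M*-Gor-criteria}(2) (Bass's conjecture gives $R$ Cohen--Macaulay, then Theorem~\ref{thm:main-1} with $N=R$). The one simplification you are missing is that the nonvanishing of $M^*$ requires no rank or torsion argument and no reduction modulo a regular sequence: Remark~\ref{rmk:Hom-nonzero} gives it directly, since for nonzero $M$ the grade formula forces $\min\{i:\Ext_R^i(M,R)\ne 0\}\le d$, and the AR-hypothesis already kills $\Ext_R^{1\le i\le d}(M,R)$.
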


	The conjecture is known to hold true in the following cases:
%	(1) $R$ has finite representation type or radical square zero \cite{AR75}.
	(1) $R$ is complete intersection \cite[1.9]{ADS93}. (2) $M$ has finite complete intersection dimension \cite[Thm.~4.3]{AY98}.
	(3) $R$ is a deformation of a CM local ring of minimal multiplicity \cite{DG21}.
	(4) $ R $ is a locally excellent Cohen-Macaulay (in short, CM) normal ring containing $\mathbb{Q}$ \cite[Thm.~0.1]{HL04}. (5) $R$ is a Gorenstein normal ring \cite[Cor.~4]{Ar09}. (6) $R$ is a fiber product of two local rings of the same residue field \cite[1.2]{NS17}.	(7) $R$ is CM of dimension $d \ge 1$, $M$ is maximal Cohen-Macaulay (in short, MCM) such that $\Ext_R^{1 \le i \le d}(M, \Hom_R(M,M)) = 0$ and $M$ has rank $1$ \cite[Thm.~1.5]{GT17}. (8) $R$ is a CM normal domain, $M$ is MCM and $\Hom_R(M,M)$ is free \cite[Thm.~3.16]{DEL21}. (9) $R$ is normal and $\Hom_R(M,M)$ has finite Gorenstein dimension \cite[Cor.~1.6]{ST19}. (10) $R$ is CM normal \cite[Cor.~1.3]{KOT21}. (11) $R$ is CM such that $e(R)\le (7/4) \codim(R) + 1$, or $R$ is Gorenstein such that $ e(R) \le \codim(R) + 6$ \cite[Thm.~C]{LM20}. (There is some overlapping among these eleven conditions, e.g., (1) is included in (2) as well as in (3); while (4), (5) and (8) are included in (10).) However the conjecture is widely open even for Gorenstein local rings. In the present study, as applications of Theorems~\ref{thm:main-1} and \ref{thm:main-2}, we obtain the following.
	\begin{corollary}[=\ref{cor:M-star-ARC} and \ref{cor:ARC}]
		The Auslander-Reiten conjecture holds true for a $($finitely generated$)$ module $M$ over a commutative Noetherian ring $R$ when at least one of $\Hom_R(M,R)$ and $\Hom_R(M,M)$ has finite injective dimension.
	\end{corollary}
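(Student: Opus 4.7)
The plan is to reduce to the local situation of Setup~\ref{setup} and then invoke Theorems~\ref{thm:main-1} and \ref{thm:main-2} directly. Since a finitely generated module $M$ is projective over $R$ if and only if $M_\fm$ is free over $R_\fm$ for every maximal ideal $\fm$, and since both the formation of $\Hom$ and finiteness of injective dimension are preserved under localization at primes, I may assume $(R,\fm)$ is local of dimension $d$, $M$ is nonzero and finitely generated, and $\Ext_R^i(M, M\oplus R)=0$ for every $i\ge 1$.

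If $\Hom_R(M,R)$ has finite injective dimension, I apply Theorem~\ref{thm:main-1} with $N=R$. Its vanishing hypothesis $\Ext_R^i(M,R)=0$ for $1\le i\le d$ follows from the Auslander-Reiten assumption. For the depth hypothesis $\depth R = d$, equivalently that $R$ is Cohen-Macaulay, I invoke Bass' theorem, which however requires $\Hom_R(M,R)\ne 0$. Were this Hom zero, then combined with the AR vanishing for $i\ge 1$ I would get $\Ext_R^i(M,R)=0$ for every $i\ge 0$, contradicting the fact that a nonzero finitely generated module over a Noetherian ring has finite grade. Thus Bass applies, $R$ is Cohen-Macaulay, and Theorem~\ref{thm:main-1} forces $M$ to be free.

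If instead $\Hom_R(M,M)$ has finite injective dimension, I apply Theorem~\ref{thm:main-2} directly. Since $M\ne 0$, the identity map $\id_M$ is a nonzero element of $\Hom_R(M,M)$, so the hypothesis is not vacuous. The required vanishing---namely $\Ext_R^i(M,R)=0$ for $1\le i\le 2d+1$ and $\Ext_R^j(M,M)=0$ for $1\le j\le\max\{1,d-1\}$---is a special case of the AR vanishing. Theorem~\ref{thm:main-2} then yields $M$ free (and, as a bonus, $R$ Gorenstein).

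The real content of the corollary is already contained in Theorems~\ref{thm:main-1} and \ref{thm:main-2}; once those are in hand, the deduction is essentially formal. The only mild subtlety I expect to have to handle is in the first case, where I must exclude the degenerate possibility $\Hom_R(M,R)=0$ before invoking Bass' theorem. Beyond that and the localization reduction, there is nothing substantial left to do.
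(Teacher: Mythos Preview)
Your proposal is correct and follows essentially the same route as the paper. The paper packages the $\Hom_R(M,R)$ case through an intermediate result (Corollary~\ref{cor:injdim-M*-Gor-criteria}(2)), but the content is identical to yours: use the finiteness of grade (Remark~\ref{rmk:Hom-nonzero}) to see $M^*\ne0$, invoke Bass to get $R$ Cohen--Macaulay, then apply Theorem~\ref{thm:main-1}; the $\Hom_R(M,M)$ case is the direct application of Theorem~\ref{thm:main-2} after localizing, exactly as you describe.
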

	More precise statements (about criteria for a module to be free over a local ring) can be seen in Corollaries~\ref{cor:injdim-M*-Gor-criteria}.(2), \ref{cor:freeness-self-dual-injdim} and \ref{cor:Gor-ARC}, and Theorem~\ref{thm:main-2}.
	
	We also provide some new characterizations of Gorenstein local rings in terms of vanishing of certain Ext and finite injective dimension of Hom.
	
	\begin{customtheorem}{\ref{thm:characterizations-Gor}}\label{cor:char-Gor-rings}
		The following statements are equivalent:
		\begin{enumerate}[\rm (1)]
			\item $R$ is Gorenstein.
			\item $R$ admits a module $M$ such that $\Ext_R^i(M,R)=0$ for all $ 1 \le i \le d-1 $ and $ M^* $ is nonzero and of finite injective dimension.
			\item $R$ admits a nonzero module $M$ such that $\Ext_R^i(M,R)=\Ext_R^j(M,M)=0$ for all $1\le i\le 2d+1$ and $1\le j\le\max\{1,d-1\}$, and $\injdim_R(\Hom_R(M,M)) < \infty$.
			\item $R$ admits a module $M$ such that $\depth(M)=d$, $\Ext_R^j(M,M)=0$ for all $1 \le j \le d$ and $\injdim_R(\Hom_R(M,M)) < \infty$.
			\item $R$ admits a module $M$ such that
			\begin{center}
				$ \depth(\Hom_R(M,M)) = \depth(R) $ and $ \injdim_R(\Hom_R(M,M)) < \infty $.
			\end{center}
		\end{enumerate}
	\end{customtheorem}

	The characterizations given in Theorem~\ref{cor:char-Gor-rings} are motivated by the criteria for Gorenstein local rings due to Ulrich, Hanes-Huneke, Jorgensen-Leuschke (see \ref{para:char-Gor} for the details), and the following classical results.

	\begin{theorem}[Peskine-Szpiro]\cite[Chapitre II, Th\'{e}or\`{e}me (5.5)]{PS73}\label{PS}
		The ring $R$ is Gorenstein \iff it has a nonzero cyclic module of finite injective dimension.
	\end{theorem}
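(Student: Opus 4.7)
The implication ``Gorenstein $\Rightarrow$ cyclic module of finite injective dimension'' is immediate: take $M=R$, which is nonzero, cyclic, and satisfies $\injdim_R R=d<\infty$ by the very definition of Gorenstein. The substance lies in the converse.

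Let $R/I$ be a nonzero cyclic $R$-module with $\injdim_R(R/I)<\infty$. My strategy is threefold: (i) deduce that $R$ is Cohen-Macaulay; (ii) reduce by induction on $d=\dim R$ to the Artinian case; and (iii) dispatch the Artinian case using Matlis duality. For (i), I would invoke the Bass conjecture---proved by Peskine-Szpiro in equal characteristic and extended by Roberts to mixed characteristic via the New Intersection Theorem---which asserts that a local ring admitting a nonzero finitely generated module of finite injective dimension is Cohen-Macaulay. Hence $\depth R=\dim R=d$, and the Bass formula gives $\injdim_R(R/I)=d$.

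For (ii), when $d\ge 1$ and $\depth(R/I)\ge 1$, prime avoidance applied to $\Ass R\cup\Ass(R/I)$ supplies an element $x\in\fm$ regular on both $R$ and $R/I$; the standard change-of-rings identity $\injdim_{R/xR}(R/(I+xR))=\injdim_R(R/I)-1$ then yields a nonzero cyclic $(R/xR)$-module of finite injective dimension over the Cohen-Macaulay ring $R/xR$ of dimension $d-1$, and the inductive hypothesis gives Gorensteinness of $R/xR$, hence of $R$. The degenerate subcase $\depth(R/I)=0$ (so that $k\hookrightarrow R/I$) needs separate treatment: combining Bass's inequality $\mu^d(\fm,R/I)\ne 0$ with a careful analysis of the minimal injective resolution of $R/I$ extracts enough information to force $R$ to be regular, hence Gorenstein. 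Step (iii) is the base $d=0$: here $R$ is Artinian and $R/I$ is injective, so $R/I\cong E_R(k)^n$ for some $n\ge 1$ since $E=E_R(k)$ is the unique indecomposable injective. Matlis duality gives $\ell_R(E)=\ell_R(R)$, and the chain $n\cdot\ell_R(R)=\ell_R(R/I)\le\ell_R(R)$ forces $n=1$ and $I=0$; hence $R\cong E_R(k)$ is self-injective, i.e., Gorenstein.

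The principal obstacle is step (i): Bass's conjecture is a deep theorem built on the New Intersection Theorem, and the cyclic hypothesis alone does not yield Cohen-Macaulayness by elementary means. A secondary subtlety is the depth-zero subcase in (ii), where direct descent along a regular element is unavailable and one must instead exploit finer structural information from the injective resolution, rather than reduce the dimension directly.
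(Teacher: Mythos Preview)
The paper does not give its own proof of this theorem: it is stated with a citation to \cite{PS73} and used as background, so there is no argument in the paper to compare against. Assessing your plan on its own merits, steps (i), (iii), and the positive-depth branch of (ii) are sound (granting Bass's conjecture, as the paper does). The genuine gap is in the depth-zero subcase of step (ii).

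Your claim there is that when $d\ge 1$, $R$ is Cohen--Macaulay, $\depth(R/I)=0$, and $\injdim_R(R/I)<\infty$, a ``careful analysis of the minimal injective resolution'' forces $R$ to be \emph{regular}. This is false. Take $R=k[[t^2,t^3]]$, a one-dimensional Gorenstein non-regular local domain, and $I=(t^2)$. Since $t^2$ is $R$-regular, $\projdim_R(R/I)=1$, and as $R$ is Gorenstein this gives $\injdim_R(R/I)<\infty$. The quotient $R/(t^2)$ has $k$-basis $\{1,t^3\}$ and $t^3\cdot t^3=t^6=(t^2)^3\equiv 0$, so $\depth(R/I)=0$. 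All your hypotheses hold, yet $R$ is not regular. Thus the depth-zero branch cannot be dispatched by the mechanism you describe; the invariant $\mu^d(\fm,R/I)\ne 0$ is always true when $\injdim_R(R/I)=d$, and carries no extra information here. You need a genuinely different idea for this case---for instance, after completing and invoking the canonical module $\omega$, arguing directly that $\type(R)=1$ from the cyclicity of $R/I$, or replacing $R/I$ by another cyclic module of positive depth---rather than the descent-along-a-regular-element scheme, which simply does not apply when $\depth(R/I)=0$.
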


	\begin{theorem}[Foxby]\cite{Fo77}\label{Foxby}
		The ring $R$ is Gorenstein \iff it has a nonzero $($finitely generated$)$ module $M$ for which $\injdim_R(M) < \infty$ and $\projdim_R(M) < \infty$.
	\end{theorem}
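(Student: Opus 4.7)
The plan is to establish the non-trivial (``only if'') direction; the converse is immediate by taking $M = R$, since $R$ Gorenstein of dimension $d$ gives $\projdim_R(R) = 0$ and $\injdim_R(R) = d$.

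Suppose $M \ne 0$ is finitely generated with $p := \projdim_R(M) < \infty$ and $\injdim_R(M) < \infty$. I would first reduce to the complete case (both hypotheses and the conclusion are preserved under the faithfully flat completion) and invoke Bass's theorem that a finitely generated module of finite injective dimension exists only over a Cohen-Macaulay ring, so $R$ is CM; let $\omega$ denote its canonical module and set $k = R/\fm$. Bass's formula gives $\injdim_R(M) = d$, and the Auslander-Buchsbaum formula gives $p \le d$.

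The key input is the classical Foxby (Avramov-Foxby) equivalence: over a complete Cohen-Macaulay local ring with canonical module, a finitely generated module $M$ has finite injective dimension if and only if the evaluation $\omega \otimes_R \Hom_R(\omega, M) \to M$ is an isomorphism, $\Tor^R_{\ge 1}(\omega, \Hom_R(\omega, M)) = 0$, and $N := \Hom_R(\omega, M)$ has finite projective dimension. Since $M \ne 0$, also $N \ne 0$, so $\beta_0 := \dim_k(N/\fm N) \ge 1$.

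The conclusion then follows from a short calculation in the derived category. Using $M \simeq \omega \otimes^L_R N$ and associativity, $M \otimes^L_R k \simeq \omega \otimes^L_R (N \otimes^L_R k)$, and because $k$ is a field, $N \otimes^L_R k$ is formal and quasi-isomorphic to $\bigoplus_{i \ge 0} k^{\beta_i}[i]$ with $\beta_i := \dim_k \Tor^R_i(N,k)$. Taking $H_j$ yields
\[
\dim_k \Tor^R_j(M,k) \;=\; \sum_{i \ge 0} \beta_i \cdot \dim_k \Tor^R_{j-i}(\omega, k).
\]
Isolating the $i=0$ term and using $\beta_0 \ge 1$ gives $\dim_k \Tor^R_j(\omega,k) \le \dim_k \Tor^R_j(M,k) = 0$ for every $j > p$, so $\projdim_R(\omega) \le p < \infty$. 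Since $\omega$ is maximal Cohen-Macaulay, Auslander-Buchsbaum then forces $\projdim_R(\omega) = 0$, so $\omega$ is free; the isomorphism $\End_R(\omega) \cong R$ pins its rank to $1$, whence $\omega \cong R$ and $R$ is Gorenstein. The main obstacle is importing the Foxby correspondence, specifically the implication that $\injdim_R(M) < \infty$ forces $\Hom_R(\omega, M)$ to have finite projective dimension; with that classical input granted, the Tor comparison and the Auslander-Buchsbaum collapse are routine.
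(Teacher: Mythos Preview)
The paper does not supply its own proof of this statement; Foxby's theorem is quoted in the introduction with a bare citation to Foxby's 1977 paper, alongside the Peskine--Szpiro criterion and Bass's conjecture, as classical background motivating the authors' new characterizations. There is therefore no argument in the paper to compare your proposal against.

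That said, your argument is essentially correct, with one terminological slip: you have the direction labels reversed. The ``only if'' assertion reads ``$R$ Gorenstein $\Rightarrow$ such an $M$ exists'', and that is the easy direction handled by $M=R$; the substantive implication you actually prove is the ``if'' direction. The mathematics after that point is fine: passing to the completion, invoking Bass's conjecture to get $R$ Cohen--Macaulay with canonical module $\omega$, using the Foxby--Sharp equivalence to write $M\simeq\omega\otimes_R^{\mathbf L}N$ with $\projdim_R N<\infty$, and then reading off $\projdim_R\omega<\infty$ from the K\"unneth-type Tor identity is a valid route to $\omega\cong R$. Two small caveats: first, the Foxby correspondence you import is itself a theorem of Foxby from the same circle of ideas, so the argument leans on machinery very close to the result being proved; second, Bass's conjecture (Roberts, 1987) postdates Foxby's 1977 paper, so the original proof necessarily avoided it. Neither of these is a logical gap, but you should be aware that lighter arguments exist.
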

	
	\begin{theorem}[Bass' Conjecture]\cite{PS73, Rob87}\label{Bass}
		If $R$ admits a nonzero $($finitely generated$)$ module of finite injective dimension, then $R$ is CM.
	\end{theorem}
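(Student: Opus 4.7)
The plan is to invoke the \emph{New Intersection Theorem} of Peskine--Szpiro--Hochster--Roberts: a bounded complex of finitely generated free $R$-modules with nonzero homology of finite length has length at least $d = \dim(R)$. Since both the Cohen-Macaulay property of $R$ and finiteness of $\injdim_R(M)$ pass to the completion $\widehat{R}$, I would first reduce to the case where $R$ is complete.

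Next, I would invoke \emph{Bass's formula}: for a nonzero finitely generated $R$-module $M$ with $\injdim_R(M) < \infty$, $\injdim_R(M) = \depth(R)$. This is classical and rests on two observations: (i) any $R$-regular element is $M$-regular when $\injdim_R(M) < \infty$, so by inducting along a maximal $R$-regular sequence (which reduces $\injdim$ and $\depth$ in step) one may assume $\depth(R) = 0$, where an embedding $k \hookrightarrow R$ together with the long exact Ext sequence forces $\injdim_R(M) = 0$; and (ii) a Koszul computation on a maximal $R$-regular sequence $\ux$ of length $t = \depth(R)$ yields $\Ext_R^t(R/(\ux), M) = M/\ux M \ne 0$, which combines with (i) to pin down $\injdim_R(M) = \depth(R)$. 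Since $\depth(R) \le \dim(R)$ always holds, Bass's Conjecture reduces to the single inequality $\depth(R) \ge \dim(R)$.

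For this remaining inequality, I would construct, starting from $M$, a bounded complex $F_\bullet$ of finitely generated free $R$-modules of length at most $s := \injdim_R(M) = \depth(R)$ whose total homology is nonzero and of finite length; the New Intersection Theorem then forces $s \ge d$, giving $\depth(R) \ge d$. A natural candidate: fix a system of parameters $\ux = x_1, \dots, x_d$, process a minimal injective resolution of $M$ against the Koszul complex $K_\bullet(\ux; R)$, and use Matlis duality on the complete ring $R$ to exchange injectives for flats, then approximate by finitely generated free modules of the same length to produce $F_\bullet$.

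The main obstacle is precisely this construction: converting finiteness of $\injdim_R(M)$ into a short free complex with finite-length homology is the deep content of Peskine--Szpiro's proof (in equicharacteristic, via Frobenius twists applied to $M$ and intersection multiplicity estimates) and of Roberts' proof (in full generality, via local Chern character theory for perfect complexes on $\Spec R$). No known elementary argument sidesteps this machinery, and the actual write-up would invoke their results for this final step.
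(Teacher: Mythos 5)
The paper does not prove this statement at all: it is quoted as a known theorem with the citations \cite{PS73, Rob87}, and that is the full extent of the ``proof'' in the text. Your proposal is, in substance, an annotated version of that same citation: you outline the standard reduction (pass to the completion, apply Bass's formula $\injdim_R(M)=\depth(R)$, and reduce to $\depth(R)\ge\dim(R)$ via the New Intersection Theorem) and then explicitly defer the genuinely hard step to Peskine--Szpiro and Roberts. Since the deep content really is irreducible to elementary arguments, deferring it is the right call, and your overall strategy matches how the literature actually derives Bass's conjecture.

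Two caveats on the details of your sketch. First, sub-step (i) of your outline of Bass's formula is false as stated: an $R$-regular element need not be $M$-regular even when $\injdim_R(M)<\infty$. For instance, over a discrete valuation ring $R$ with residue field $k$, the module $M=R\oplus k$ has finite injective dimension but $\depth(M)=0$, so no element of $\fm$ is $M$-regular. The correct proof of Bass's formula instead inducts on $\depth(R)$ using the characterization $\injdim_R(M)=\sup\{i:\Ext_R^i(k,M)\neq0\}$ (see \cite[3.1.17]{BH93}); your Koszul computation in (ii) giving $\Ext_R^t(R/(\ux),M)\cong M/\ux M\neq0$ is fine and yields the inequality $\injdim_R(M)\ge\depth(R)$. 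Second, your ``natural candidate'' for producing a short free complex with nonzero finite-length homology (Matlis-dualizing a minimal injective resolution and then ``approximating by finitely generated free modules'') is not an actual construction --- the dual of $E(R/\fp)$ for nonmaximal $\fp$ is far from finitely generated, and this is precisely where any elementary attempt collapses; the published derivations of Bass's conjecture from the intersection theorems take a different route. Since you flag this and fall back on the references, the proposal is acceptable as a citation-level argument, which is exactly what the paper itself does.
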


\section{Criteria for a module to be free}

\begin{para}
%	Unless otherwise specified, $(R,\fm,k)$ is a Noetherian local ring of dimension $d$. All $R$-modules are assumed to be finitely generated.
	Let $M$ be an $R$-module. Set $M^* := \Hom_R(M,R)$. The minimal number of generators of $M$ is denoted by $\nu(M)$, i.e., $\nu(M) = \dim_k(M \otimes_R k)$.
%	For every integer $n$, $\mu_n(M) = \dim_k(\Ext_R^n(k,M))$ denotes the $n$th Bass number of $M$.
	The type of $M$ is defined to be $\type(M)=\dim_k(\Ext_R^t(k,M))$, where $t=\depth(M)$.
\end{para}

%We first note the following well known fact.

\begin{remark}\label{rmk:Hom-nonzero}
	If $M$ and $N$ are nonzero $R$-modules, then
	\begin{equation}
		0 \le \depth(\ann_R(M),N) = \min\{ i : \Ext_R^i(M,N) \neq 0 \} \le d.
	\end{equation}
	Hence, if $\Ext_R^{1 \le i \le d}(M,N)=0$, then $\Hom_R(M,N)$ is a nonzero module.
\end{remark}

%The following lemma is proved in \cite[2.1]{GT17}.
%
%\begin{lemma}[Goto-Takahashi]\label{lem:Goto-Ryo}
%	Let $n \ge 0$ be an integer. Let $M$ and $N$ be $R$-modules such that $\depth(N) \ge n$ and $\Ext^{1\le i\le n}_R(M,N) = 0$. Then
%	\begin{center}
%		$\mu_n(\Hom_R(M,N)) = \nu(M) \mu_n(N)$.
%	\end{center}
%\end{lemma}
%
%As a consequence, the following can be easily observed.
%
%\begin{proposition}\label{prop:Goto-Takahashi}
%	Let $M$ and $N$ be $R$-modules.
%	\begin{enumerate}[\rm (1)]
%		\item Set $s := \depth(N)$. If $\Ext^i_R(M,N) = 0$ for all $ 1 \le i \le s $, then
%		\begin{enumerate}[\rm (i)]
%			\item $\depth(\Hom_R(M,N)) = \depth(N)$.
%			\item $\type(\Hom_R(M,N)) = \nu(M)\type(N)$.
%		\end{enumerate}
%		\item When $\depth(N) = d$, the vanishing of $\Ext^{1\le i\le d-1}_R(M,N)$ ensures that either $\Hom_R(M,N)=0$ or $\depth(\Hom_R(M,N)) =d$.
%	\end{enumerate}
%\end{proposition}
%
%\begin{proof}
%	(1) By Lemma~\ref{lem:Goto-Ryo}, $\mu_n(\Hom_R(M,N)) = \nu(M) \mu_n(N)$ for all $0 \le n \le s$. For an $R$-module $L$, since $\depth(L) = \min\{ n : \mu_n(L) \neq 0 \}$, it follows that $\depth(\Hom_R(M,N)) = s$, and hence $\type(\Hom_R(M,N)) = \nu(M)\type(N)$.
%	
%	(2) In view of Lemma~\ref{lem:Goto-Ryo}, since $\mu_n(\Hom_R(M,N)) = \nu(M) \mu_n(N) = 0$ for all $n < d$, one obtains that either $\Hom_R(M,N)=0$ or $\depth(\Hom_R(M,N)) =d$.
%\end{proof}
 
 We deduce our first main result from the following theorem, where we study the consequences of $\Hom_R(M,N)$ having finite injective dimension under the condition that $\Ext^{1 \le i \le d-1}_R(M,N) = 0$.
 
\begin{theorem}\label{thm:Hom-injdim-finite}
	Let $M$ and $N$ be $R$-modules such that $\depth(N)=d$,
	\begin{center}
		$\Hom_R(M,N) \ne 0$	and $\Ext^i_R(M,N)=0$ for all $1 \le i \le d-1$.
	\end{center}
	Suppose that $\Hom_R(M,N)$ has finite injective dimension. Then, $R$ is CM, $N$ is MCM and of finite injective dimension, and $M\cong\Gamma_{\fm}(M) \oplus R^r$ for some $r\ge0$.
\end{theorem}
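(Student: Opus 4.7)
The plan proceeds in three phases: (i) establish basic Cohen--Macaulay structure and show $L := \Hom_R(M,N)$ is a shifted canonical module; (ii) promote finite injective dimension from $L$ to $N$; (iii) decompose $M$ via local duality.

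For phase (i), since $L\ne0$ has finite injective dimension, Bass's theorem (Theorem~\ref{Bass}) forces $R$ to be CM; the hypothesis $\depth(N)=d$ then makes $N$ MCM. Applying $\Hom_R(-,N)$ to a minimal free resolution $F_\bullet\to M$ and using $\Ext_R^{1\le i\le d-1}(M,N)=0$ yields an exact sequence
\[ 0\to L\to N^{\beta_0}\to N^{\beta_1}\to\cdots\to N^{\beta_{d-1}}\to\Omega_d\to 0, \]
where $\Omega_d$ is the image of $\Hom_R(F_{d-1},N)\to\Hom_R(F_d,N)$. Splitting this into short exact sequences and iterating the depth lemma starting from $\depth(\Omega_d)\ge 0$, each MCM piece $N^{\beta_i}$ propagates depth back to $L$, giving $\depth(L)\ge d$. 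Hence $L$ is MCM of finite injective dimension. Passing to $\widehat R$ (which preserves all hypotheses and conclusions), we may assume $R$ is complete with canonical module $\omega$, and the classical structure theorem for MCM modules of finite injective dimension yields $L\cong\omega^s$ for some $s\ge 1$.

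Phase (ii) is the main obstacle: I must show $N\cong\omega^t$, equivalently that $N$ lies in Foxby's Bass class $\mathcal B(R)$. Since $N$ is MCM it is $\omega$-reflexive, so tensor-hom adjunction supplies a quasi-isomorphism
\[ \mathbf R\Hom_R(M,N)\simeq \mathbf R\Hom_R\!\left(M\otimes_R^{\mathbf L}\Hom_R(N,\omega),\,\omega\right). \]
The left-hand side has degree-$0$ cohomology $L\cong\omega^s$ and vanishes in degrees $1,\ldots,d-1$. Analyzing the associated hyper-Ext spectral sequence via local duality (which identifies $\Ext^p_R(-,\omega)$ with Matlis duals of local cohomology) and using that $L\in\mathcal B(R)$, one aims to derive $\Ext_R^{i\ge1}(\omega,N)=0$ and that the natural evaluation $\omega\otimes_R\Hom_R(\omega,N)\to N$ is an isomorphism, placing $N\in\mathcal B(R)$. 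The technical heart is in controlling the tail term $\Omega_d$ (which is not a priori MCM) together with the fact that the Ext vanishing is only partial; an alternative route is induction on $d$ after first replacing $M$ by $M/\Gamma_{\fm}(M)$ and then cutting by an element simultaneously regular on $R$, $N$, $L$, and $M/\Gamma_{\fm}(M)$. Once $N\in\mathcal B(R)$, being MCM of finite injective dimension forces $N\cong\omega^t$ for some $t\ge 1$.

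For phase (iii) the argument is formal. From $\omega^s\cong\Hom_R(M,\omega^t)\cong\Hom_R(M,\omega)^t$, Krull--Schmidt over the complete local ring $R$ yields $\Hom_R(M,\omega)\cong\omega^r$ with $tr=s$. The vanishing $\Ext_R^{1\le i\le d-1}(M,\omega)=0$ (inherited from $N\cong\omega^t$) together with local duality $\Ext^i_R(M,\omega)\cong H_{\fm}^{d-i}(M)^\vee$ (Matlis dual) gives $H_{\fm}^j(M)=0$ for $1\le j\le d-1$ and $H_{\fm}^d(M)\cong E(k)^r$. Setting $\overline M:=M/\Gamma_{\fm}(M)$, the local cohomology $H_{\fm}^j(\overline M)$ vanishes for $j\le d-1$, so $\overline M$ is MCM. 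Because $\omega$ has positive depth for $d\ge 1$, $\Hom_R(\Gamma_{\fm}(M),\omega)=0$, so $\Hom_R(\overline M,\omega)\cong\omega^r$; MCM-reflexivity then forces $\overline M\cong R^r$. Finally $0\to\Gamma_{\fm}(M)\to M\to R^r\to 0$ splits as $R^r$ is free, yielding $M\cong\Gamma_{\fm}(M)\oplus R^r$. The boundary case $d=0$ (where the Ext condition is vacuous and $R$ is Artinian) is handled directly by Matlis duality: $L$ is injective, and via the adjunction $\Hom_R(M,N)\cong(M\otimes_R\Hom_R(N,E(k)))^\vee$ this forces $M\otimes_R\Hom_R(N,E(k))$ to be free, whence $N$ is injective.
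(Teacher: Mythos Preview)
Your Phase~(i) and Phase~(iii) are essentially correct and parallel the paper's argument: the depth--lemma chase showing $L=\Hom_R(M,N)$ is MCM, hence $L\cong\omega^s$ after completion, and the local--duality computation yielding $M/\Gamma_{\fm}(M)\cong R^r$ once $N\cong\omega^t$ is known, both go through as you describe.

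The genuine gap is Phase~(ii). You correctly identify it as ``the main obstacle,'' but what follows is not a proof: the hyper-Ext spectral sequence attached to $\mathbf R\Hom_R(M\otimes_R^{\mathbf L}N^{\dag},\omega)$ gives no direct handle on $\Ext_R^{\ge1}(\omega,N)$ without already knowing something about $M$, and the phrase ``one aims to derive'' is where an argument should be. The inductive alternative you mention (cutting by a regular element) is also only gestured at; making it work requires tracking how $\Hom$ and the partial Ext vanishing behave under reduction modulo $x$, which is not automatic. Your $d=0$ endgame has the same defect: from $M\otimes_R N^\vee\cong R^s$ you assert ``whence $N$ is injective,'' but freeness of a tensor product over an Artinian local ring does not immediately force freeness of a factor, so this step needs justification.

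The paper bypasses all of this with a much more elementary idea. After reducing to $N$ \emph{indecomposable} (legitimate over the complete ring $\widehat R$ by Krull--Schmidt), look only at the first short exact sequence
\[
0\longrightarrow L\longrightarrow N^{\beta_0}\longrightarrow C\longrightarrow 0
\]
extracted from your long exact sequence. Your own depth count shows $C$ is either zero or MCM, and you already know $L\cong\omega^s$; hence $\Ext_R^1(C,L)\cong\Ext_R^1(C,\omega)^s=0$ and the sequence \emph{splits}. This gives $N^{\beta_0}\cong\omega^s\oplus C$, and since $N$ is indecomposable, Krull--Schmidt forces $N\cong\omega$. Finite injective dimension of $N$ is now immediate, and no spectral sequence or Bass--class machinery is needed. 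For general $N$ one then picks an indecomposable summand $N'$ with $\Hom_R(M,N')\neq0$, applies the above to get $N'\cong\omega$ and the decomposition of $M$, and finishes by a short case analysis (if $r\ge1$ then $N$ is a summand of $\Hom_R(M,N)$; if $r=0$ then $d=0$ and every indecomposable summand of $N$ is handled as above). This splitting-plus-Krull--Schmidt step is the missing idea in your Phase~(ii).
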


\begin{proof}
%	Only the non-trivial implication requires a proof. Suppose that $\Hom_R(M,N)$ has finite injective dimension. Then, b
	By Theorem~\ref{Bass}, $R$ is CM, and hence $N$ is MCM. We need to show that
	\begin{equation}\label{claim}
	\mbox{$\injdim_R(N) < \infty$\, and $M\cong\Gamma_{\fm}(M)\oplus R^r$ for some $r\ge0$.}
	\end{equation}
	We may assume that $R$ is complete. Then $R$ admits a canonical module $\omega$.
	
	(1) We first prove the assertion \eqref{claim} when $N$ is indecomposable. Let $\mathbb{F} : \cdots \to F_2 \to F_1 \to F_0 \to 0 $ be a free resolution of $M$. Since $\Ext^i_R(M,N)=0$ for all $1 \le i \le d-1$, the resolution $\mathbb{F}$ induces an exact sequence
	\begin{equation*}
		0 \to \Hom_R(M,N) \to \Hom_R(F_0, N) \xrightarrow{f} \Hom_R(F_1, N) \to \cdots \xrightarrow{g} \Hom_R(F_e, N),
	\end{equation*}
	where $ e = \max\{1, d\} $. Set $ C := \Image(f) $ and $D:=\coker(g)$. Hence there are two exact sequences:
	\begin{align}
	&0 \to \Hom_R(M,N) \to \Hom_R(F_0, N) \to C \to 0 \quad \mbox{ and}\label{s.e.s-1-Hom}\\
	&0 \to C \to \Hom_R(F_1, N) \to \Hom_R(F_2, N) \to \cdots \xrightarrow{g} \Hom_R(F_e, N) \to D \to 0.\label{s.e.s-2-Hom}
	\end{align}
	Note that each $\Hom_R(F_i, N)$ is MCM. In the first case, assume that $C=0$. Then $\Hom_R(M,N) \cong \Hom_R(F_0, N)$ is MCM. In the second case, we have $C\neq 0$. Applying the depth lemma to the exact sequences \eqref{s.e.s-2-Hom} and \eqref{s.e.s-1-Hom} respectively, we obtain that both $C$ and $\Hom_R(M,N)$ are MCM. Thus, in any case, $\Hom_R(M,N) \cong \omega^n$ for some $n\ge 1$, cf.~\cite[3.3.28]{BH93}. Moreover, in both the cases,
	\begin{center}
		$\Ext^1_R(C,\Hom_R(M,N)) \cong \Ext^1_R(C,\omega^n) = 0$,
	\end{center}
	see, e.g., \cite[3.3.10]{BH93}. Therefore \eqref{s.e.s-1-Hom} splits. Hence, setting $m := \rank(F_0)$,
	\begin{center}
		$N^m \cong \Hom_R(F_0, N) \cong \Hom_R(M,N) \oplus C \cong \omega^n \oplus C$.
	\end{center}
	Since $N$ is indecomposable, by the Krull--Schmidt theorem, $N\cong \omega$. In particular, $\injdim_R(N)$ is finite.
	
	Next we show that $M':=M/\Gamma_{\fm}(M)$ is free, i.e., $M'\cong R^r$ for some $r\ge0$. It further implies that the exact sequence $0\to\Gamma_{\fm}(M)\to M\to M'\to0$ splits, and hence $M\cong\Gamma_{\fm}(M)\oplus R^r$. In order to prove that $M'$ is free, we may assume that $M'\neq 0$, i.e., $\Gamma_{\fm}(M) \neq M$. Particularly, it ensures that $d\ge 1$. We take two steps.
	
	(1a) Consider the case $\Gamma_{\fm}(M)=0$. Then the local duality theorem (cf.~\cite[3.5.8]{BH93}) implies that $\Ext^d_R(M,\omega)=0$. Hence, by the given assumption, $\Ext_R^i(M,\omega) \cong \Ext_R^i(M,N)=0$ for all $1\le i\le d$. Therefore $M$ is MCM (cf.~\cite[3.1.24]{BH93}) and
	\begin{center}
		$ M \cong M^{\dag\dag} \cong \Hom_R(M,N)^\dag \cong (\omega^n)^\dag \cong R^n $,
	\end{center}
	where $ (-)^\dag = \Hom_R(-,\omega) $; see, e.g., \cite[3.3.10]{BH93}. Thus $M' = M$ is free.
	
	(1b) Consider the general case. Since $\Gamma_{\fm}(M)$ has finite length,
	\begin{center}
		$\Ext^i_R(\Gamma_{\fm}(M),N) \cong \Ext^i_R(\Gamma_{\fm}(M),\omega) = 0 $ for all $i < d$.
	\end{center}
	Hence, from the long exact sequence of Ext modules induced by $0\to\Gamma_{\fm}(M)\to M\to M'\to0$, it follows that
	\begin{center}
		$\Hom_R(M',N)\cong \Hom_R(M,N)\ne 0$ and $\Ext^i_R(M',N) \cong \Ext^i_R(M,N) = 0$
	\end{center}
	for all $1\le i\le d-1$.  Since $\Gamma_{\fm}(M')=0$, we can apply (1a) to see that $M'$ is free. This completes the proof of \eqref{claim} when $N$ is indecomposable.
	
	(2) Next we consider the general case. Since $\Hom_R(M,N) \ne 0$, there exists an indecomposable direct summand $N'$ of $N$ such that $\Hom_R(M,N')\neq 0$. Applying (1) to $M$ and $N'$, we have that $N' \cong \omega$ and $M \cong \Gamma_{\fm}(M)\oplus R^r$ for some $r\ge 0$. It remains to show that $\injdim_R(N) < \infty$. We have two possible cases.
	
	(2a) Suppose that $r \ge 1$. Note that $\Hom_R(M,N) \cong \Hom_R(\Gamma_{\fm}(M),N) \oplus N^r$. Thus $N$ is a direct summand of $\Hom_R(M,N)$, and hence $\injdim_R(N) < \infty$.
	
	(2b) Assume that $r=0$. Then $M=\Gamma_{\fm}(M)$ has finite length, and the assumption $\Hom_R(M,N)\neq 0$ ensures that $d=0$ as $\depth(N) = d$. Let $N''$ be any indecomposable direct summand of $N$. In particular, $N''\neq 0$. If $\Hom_R(M,N'')=0$, then $\Supp(M)\cap \Ass(N'') = \emptyset$, and hence either $M$ or $N''$ is zero as $\Spec(R)=\{\fm\}$, which is a contradiction. Thus $\Hom_R(M,N'')\neq 0$. Applying (1) again, we have $N''\cong\omega$. It follows that $N \cong \omega^u$ for some $u \ge 1$, and $\injdim_R(N) < \infty$.
\end{proof}

\begin{remark}
	In Theorem~\ref{thm:Hom-injdim-finite}, the assumptions $\Hom_R(M,N) \ne 0$	and $\Ext^{i}_R(M,N)=0$ for all $1 \le i \le d-1$
%	\begin{center}
%		$\Hom_R(M,N) \ne 0$	and $\Ext^{1 \le i \le d-1}_R(M,N)=0$
%	\end{center}
	cannot be omitted, see Examples~\ref{exam:Hom-zero} and \ref{exam:Hom-m-omega} respectively.
\end{remark}

Now we are in a position to prove our first main result.

\begin{theorem}\label{thm:Hom-injdim-finite-consequence}
	Let $M$ and $N$ be nonzero $R$-modules such that $\depth(N)=d$ and $\Ext_R^i(M,N)=0$ for all $1 \le i \le d$. Then $\Hom_R(M,N)$ has finite injective dimension \iff $M$ is free and $N$ has finite injective dimension.
\end{theorem}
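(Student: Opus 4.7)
The backward implication is immediate: since $M \ne 0$, freeness of $M$ means $M \cong R^r$ for some $r \ge 1$, whence $\Hom_R(M,N) \cong N^r$ has finite injective dimension iff $N$ does. For the forward implication, my plan is to harvest the structural conclusions of Theorem~\ref{thm:Hom-injdim-finite} and then use the extra vanishing $\Ext_R^d(M,N)=0$ to kill the finite-length summand that theorem leaves on $M$.

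With $M, N$ nonzero and $\Ext_R^{1\le i\le d}(M,N) = 0$, Remark~\ref{rmk:Hom-nonzero} gives $\Hom_R(M,N) \ne 0$. So Theorem~\ref{thm:Hom-injdim-finite} applies and yields that $R$ is CM, $N$ is MCM with $\injdim_R(N) < \infty$, and $M \cong L \oplus R^r$ for some $r \ge 0$, where $L := \Gamma_\fm(M)$ is a finite-length module. The ``finite injective dimension of $N$'' half of the conclusion is thus already in place, and it remains to prove $L = 0$; once done, $M \cong R^r$ with $r \ge 1$ (as $M \ne 0$), so $M$ is free.

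To force $L = 0$, I would complete at $\fm$. The completion $\widehat R$ is CM local and admits a canonical module $\omega$; since $\widehat N$ is MCM of finite injective dimension over $\widehat R$, a standard structure theorem (via the $\omega$-duality $(-)^\dag = \Hom_{\widehat R}(-,\omega)$: the module $\widehat N^\dag$ is MCM of finite projective dimension, hence free by Auslander-Buchsbaum, and then $\widehat N \cong \widehat N^{\dag\dag} \cong \omega^n$) gives $\widehat N \cong \omega^n$ for some $n \ge 1$. Faithful flatness together with $\Ext_R^d(M,N)=0$ yields $\Ext_{\widehat R}^d(\widehat M, \widehat N) = 0$, and restricting to the direct summand $L$ of $\widehat M$ (noting $\widehat L = L$, since $L$ has finite length) produces
\begin{equation*}
0 = \Ext_{\widehat R}^d(L, \omega^n) \cong \Ext_{\widehat R}^d(L, \omega)^n.
\end{equation*}
Local duality over $\widehat R$ identifies $\Ext_{\widehat R}^d(L,\omega)$ with the Matlis dual $H^0_{\fm}(L)^\vee = L^\vee$, so $L^\vee = 0$; since Matlis duality is faithful on finite-length modules, this forces $L = 0$, as required.

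The only delicate step is the structural identification $\widehat N \cong \omega^n$, which requires invoking the canonical module machinery over the completion; once this is in hand, the vanishing of $\Ext_R^d(M,N)$ destroys the finite-length torsion of $M$ in a single application of local duality, and the rest of the proof is formal bookkeeping from Theorem~\ref{thm:Hom-injdim-finite}.
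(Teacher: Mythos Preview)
Your argument works cleanly when $d\ge 1$, but it has a genuine gap at $d=0$. The hypothesis is $\Ext_R^i(M,N)=0$ for $1\le i\le d$, which is vacuous when $d=0$; in particular, you do \emph{not} have $\Ext_R^d(M,N)=0$ in that case (indeed, $\Ext_R^0=\Hom_R(M,N)\ne 0$). Moreover, over an Artinian local ring every module has finite length, so $L=\Gamma_\fm(M)=M$ and the decomposition $M\cong L\oplus R^r$ forces $r=0$. Thus the task ``prove $L=0$'' is not the right one when $d=0$: one must instead show directly that $M$ is free. The paper does this separately via Matlis duality: writing $E=E_R(k)$, one has $N\cong E^m$ and $\Hom_R(M,N)\cong E^n$, whence $M^m\cong(M^{\vee\vee})^m\cong\Hom_R(M,N)^\vee\cong R^n$, so $M$ is free.

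For $d\ge 1$ your approach is correct and close in spirit to the paper's, though the paper avoids the detour through completion and the structural identification $\widehat N\cong\omega^n$. It simply observes that since $N$ is MCM of finite injective dimension and $L=\Gamma_\fm(M)$ has finite length, one already has $\Ext_R^i(L,N)=0$ for all $i\ne d$; combined with $\Ext_R^d(L,N)=0$ (a direct summand of $\Ext_R^d(M,N)=0$), all $\Ext_R^i(L,N)$ vanish, and Remark~\ref{rmk:Hom-nonzero} then forces $L=0$. Your local-duality computation reaches the same conclusion via a slightly heavier toolkit; both are fine, but you should also note that $L=0$ implies $r\ge 1$ since $M\neq 0$, which you did.
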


\begin{proof}
	The `if' part is trivial. We show the `only if' part. Suppose that $\Hom_R(M,N)$ has finite injective dimension. In view of Remark~\ref{rmk:Hom-nonzero}, $\Hom_R(M,N) \neq 0$. Hence Theorem~\ref{thm:Hom-injdim-finite} implies that $R$ is CM, $N$ is MCM and of finite injective dimension, and $M\cong\Gamma_{\fm}(M)\oplus R^r$ for some $r\ge 0$. It remains to prove that $M$ is free. We consider two possible cases.
	
	(1) Assume that $d\ge 1$. Since $M\cong\Gamma_{\fm}(M)\oplus R^r$, the vanishing $\Ext^d_R(M,N)=0$ yields that $\Ext^d_R(\Gamma_{\fm}(M),N)=0$. Since $N$ is MCM and of finite injective dimension, and $\Gamma_{\fm}(M)$ has finite length, it follows that $\Ext^i_R(\Gamma_{\fm}(M),N)=0$ for all $i \ne d$. Thus $\Ext^i_R(\Gamma_{\fm}(M),N)=0$ for all integers $i$, which implies that $\Gamma_{\fm}(M)=0$ since $N \ne 0$ (see Remark~\ref{rmk:Hom-nonzero}). It follows that $M\cong R^r$.
	
	(2) Suppose that $d=0$. Set $E:=E_R(k)$, the injective hull of $k$. Since both $N$ and $\Hom_R(M,N)$ are injective, we have that $N\cong E^m$ and $\Hom_R(M,N) \cong E^n$ for some $m,n\ge 1$. Therefore, by Matlis duality,
	\begin{center}
		$M^m \cong (M^{\vee\vee})^m \cong ((M^\vee)^m)^\vee \cong \Hom_R(M,N)^{\vee} \cong R^n$.
	\end{center}
	This implies that $M$ is free.
\end{proof}

The examples below show that the assumptions $\Ext^{1\le i\le d-1}_R(M,N)=0$ in Theorem~\ref{thm:Hom-injdim-finite} and $\Ext^{1\le i\le d}_R(M,N)=0$ in Theorem~\ref{thm:Hom-injdim-finite-consequence} cannot be removed.

\begin{example}\label{exam:Hom-m-omega}
	Let $(R,\fm,k)$ be a CM local ring of dimension $d \ge 2$ with a canonical module $\omega$. Then $\Hom_R(\fm,\omega) \cong \omega$ (cf.~\cite[1.2.24]{BH93}) has finite injective dimension. Note that $\omega$ is MCM, but $\fm$ and $\omega$ do not satisfy the Ext vanishing condition as
	\[
	\Ext_R^i(\fm,\omega) \cong \Ext_R^{i+1}(k,\omega) = \left\{\begin{array}{ll}
	0 & \mbox{if } i \ge 1 \mbox{ and } i \neq d-1,\\
	k \neq 0 & \mbox{if } i = d-1.
	\end{array}\right.
	\]
	
	(1) We have $\fm \ncong \Gamma_{\fm}(\fm) \oplus R^r$ for any $r\ge 0$. If possible, suppose $\fm \cong \Gamma_{\fm}(\fm) \oplus R^r$ for some $r\ge 0$. By \cite[Cor.~1.2]{Dut89}, only $\Omega^d_R(k)$ may have a nonzero free direct summand. It follows that $r=0$, otherwise $R$ is a direct summand of $\fm = \Omega_R^1(k)$, which is a contradiction. Therefore $\fm \cong \Gamma_{\fm}(\fm)$ has finite length, which implies that $R$ has finite length, that is again a contradiction as $d \ge 2$.
	
	(2) The $R$-module $\fm$ is not free. Indeed, if $\fm$ is free, then $\projdim_R(k) \le 1$, which is a contradiction as $\dim(R)\ge 2$. If $R$ is non-regular, $\fm$ does not even have finite projective dimension.
\end{example}

\begin{example}\label{exam:M-is-omega*}
	Let $R$ be a $d$-dimensional non-Gorenstein CM normal local ring with a canonical module $\omega$. Set $M=\omega^\ast$ and $N=R$. Then $\Hom_R(M,N) = \omega^{\ast\ast} \cong \omega$ has finite injective dimension. We also have $\depth(N)=d$, but $\injdim_R(R) = \infty$.
\end{example}

The number of vanishing of Ext in Theorem~\ref{thm:Hom-injdim-finite-consequence} cannot be further improved.

\begin{example}\label{exam:Hom-zero}
	Let $(R,\fm,k)$ be a non-Gorenstein CM local ring of dimension $d \ge 1$. Then $\Hom_R(k,R) = 0$, so it has finite injective dimension, $\depth(R) = d$ and $\Ext^i_R(k,R) = 0$ for all $1 \le i \le d-1$, but $\projdim_R(k) = \injdim_R(R) = \infty$.
\end{example}

One may ask the following natural question.

\begin{question}
	Does Theorem~\ref{thm:Hom-injdim-finite-consequence} hold true when $\depth(N)<d$?
\end{question}

Now we discuss the consequences of Theorems~\ref{thm:Hom-injdim-finite} and \ref{thm:Hom-injdim-finite-consequence}.

\begin{corollary}\label{cor:injdim-M*-Gor-criteria}
	Let $M$ be a nonzero $R$-module such that $\Ext_R^i(M,R)=0$ for all $ 1 \le i \le d-1 $ and $ M^* $ has finite injective dimension.
	\begin{enumerate}[\rm (1)]
		\item If $M^* \neq 0$, then $R$ is Gorenstein and $M\cong\Gamma_{\fm}(M) \oplus R^r$ for some $r\ge0$.
		\item If $\Ext_R^d(M,R)=0$, then $R$ is Gorenstein and $M$ is free.
	\end{enumerate}
\end{corollary}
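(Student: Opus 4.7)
The plan is to recognize both statements as direct applications of the two preceding main theorems, specialized to $N=R$. The main subtlety is to check that all the hypotheses of those theorems are satisfied; in particular, since both \autoref{thm:Hom-injdim-finite} and \autoref{thm:Hom-injdim-finite-consequence} require $\depth(N)=d$, one must first confirm that $R$ itself is Cohen--Macaulay before invoking them. Bass' theorem (\autoref{Bass}) will handle this once we exhibit a nonzero $R$-module of finite injective dimension.

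For part (1), we already have $M^*\neq 0$ with finite injective dimension, so \autoref{Bass} immediately gives that $R$ is CM, i.e.\ $\depth(R)=d$. The remaining hypotheses of \autoref{thm:Hom-injdim-finite} (with $N=R$) are now exactly what is assumed: $\Hom_R(M,R)=M^*\ne 0$, $\Ext_R^i(M,R)=0$ for $1\le i\le d-1$, and $\injdim_R(\Hom_R(M,R))<\infty$. That theorem then yields $R\cong N$ is MCM of finite injective dimension, hence $R$ is Gorenstein, together with the desired splitting $M\cong\Gamma_{\fm}(M)\oplus R^r$.

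For part (2), the additional vanishing $\Ext_R^d(M,R)=0$ combined with $M\ne 0$ and \autoref{rmk:Hom-nonzero} forces $M^*\ne 0$; hence \autoref{Bass} again gives $R$ CM so $\depth(R)=d$. All the hypotheses of \autoref{thm:Hom-injdim-finite-consequence} with $N=R$ are met, and its conclusion is precisely that $M$ is free and $R$ has finite injective dimension, i.e.\ $R$ is Gorenstein. (Alternatively, one can derive (2) from (1): the splitting $M\cong\Gamma_{\fm}(M)\oplus R^r$ together with $\Ext_R^d(M,R)=0$ gives $\Ext_R^d(\Gamma_{\fm}(M),R)=0$, and since $R$ is Gorenstein MCM and $\Gamma_{\fm}(M)$ has finite length, this forces $\Gamma_{\fm}(M)=0$.)

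Since the substantive work has already been done in \autoref{thm:Hom-injdim-finite} and \autoref{thm:Hom-injdim-finite-consequence}, there is no real obstacle here; the only thing to be careful about is the logical order, namely verifying CM-ness of $R$ (via Bass) \emph{before} applying the theorems that presume $\depth(R)=d$.
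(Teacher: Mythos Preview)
Your proposal is correct and follows essentially the same approach as the paper: for (1) you invoke \autoref{Bass} and then \autoref{thm:Hom-injdim-finite} with $N=R$, and for (2) you use \autoref{rmk:Hom-nonzero} to get $M^*\ne 0$, then \autoref{Bass}, then \autoref{thm:Hom-injdim-finite-consequence} with $N=R$. The paper's proof is just a terse citation of exactly these results; your write-up merely makes explicit the point that $\depth(R)=d$ must be verified (via Bass) before specializing to $N=R$, and your parenthetical alternative derivation of (2) from (1) is a correct bonus observation.
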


\begin{proof}
	(1) The statement follows from Theorems~\ref{Bass} and \ref{thm:Hom-injdim-finite}.
	
	(2) Note that $M^* \neq 0$ (cf.~Remark~\ref{rmk:Hom-nonzero}). So, by Theorem~\ref{Bass}, $R$ is CM. Hence the assertion follows from Theorem~\ref{thm:Hom-injdim-finite-consequence}.
\end{proof}

\begin{remark}
	The existence of an $R$-module $M$ such that $ M^* $ is nonzero and of finite injective dimension does not necessarily imply that $R$ is Gorenstein, see Example~\ref{exam:M-is-omega*}.
\end{remark}

\begin{corollary}\label{cor:M-star-ARC}
	Let $R$ be a commutative Noetherian ring. Let $M$ be a $($finitely generated$)$ $R$-module such that $ \Hom_R(M,R) $ has finite injective dimension. Then the Auslander-Reiten conjecture holds true for $M$.
\end{corollary}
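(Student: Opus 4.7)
The plan is a reduction to the local case, where Corollary~\ref{cor:injdim-M*-Gor-criteria}(2) does the heavy lifting. Assume the Auslander-Reiten hypothesis holds, i.e.\ $\Ext_R^i(M, M \oplus R) = 0$ for every $i \ge 1$; in particular $\Ext_R^i(M, R) = 0$ for all $i \ge 1$. Since $M$ is finitely generated over the Noetherian ring $R$, projectivity of $M$ is equivalent to freeness of $M_{\fm}$ over $R_{\fm}$ for every maximal ideal $\fm \subset R$. So I would fix such an $\fm$ and reduce to proving that $M_{\fm}$ is free over the Noetherian local ring $R_{\fm}$ (which has some finite dimension $d_{\fm}$).

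Next I would verify that each hypothesis of Corollary~\ref{cor:injdim-M*-Gor-criteria}(2) passes to $(R_{\fm}, M_{\fm})$. The vanishing $\Ext_{R_{\fm}}^i(M_{\fm}, R_{\fm}) \cong \Ext_R^i(M,R)_{\fm} = 0$ for all $i \ge 1$ is immediate from the finite generation of $M$, so in particular it holds for $1 \le i \le d_{\fm}$. Similarly $\Hom_{R_{\fm}}(M_{\fm}, R_{\fm}) \cong \Hom_R(M,R)_{\fm}$, and finite injective dimension is preserved under localization, since a bounded injective resolution of $\Hom_R(M,R)$ over $R$ localizes term by term to a bounded injective resolution over $R_{\fm}$ (localizations of injective modules over a Noetherian ring are injective). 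Thus $\Hom_{R_{\fm}}(M_{\fm}, R_{\fm})$ has finite injective dimension over $R_{\fm}$.

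If $M_{\fm} = 0$, it is trivially free. Otherwise, $M_{\fm}$ is a nonzero finitely generated $R_{\fm}$-module satisfying the hypotheses of Corollary~\ref{cor:injdim-M*-Gor-criteria}(2), which then forces $M_{\fm}$ to be free. Since this holds for every maximal ideal $\fm$, $M$ is projective. The only non-trivial input beyond formal bookkeeping is the preservation of finite injective dimension under localization, and this is standard; the real content has already been absorbed into Corollary~\ref{cor:injdim-M*-Gor-criteria}(2), so no genuine obstacle arises in the present argument.
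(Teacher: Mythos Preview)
Your proof is correct and follows essentially the same route as the paper: reduce to the local case by localizing at primes (you use maximal ideals, the paper uses all primes---either works), check that the hypotheses survive localization, and then invoke Corollary~\ref{cor:injdim-M*-Gor-criteria}(2). Your write-up is in fact slightly more careful than the paper's, since you explicitly handle the case $M_{\fm}=0$ and spell out why finite injective dimension is preserved under localization.
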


\begin{proof}
	Suppose that $\Ext_R^{>0}(M,M\oplus R)=0$. What we want to show is that $M$ is projective. It is equivalent to saying that the $R_\fp$-module $M_\fp$ is free for each prime ideal $\fp$ of $R$. Replacing $R$ and $M$ with $R_\fp$ and $M_\fp$ respectively, we may assume that $R$ is local. Hence the assertion follows from Corollary~\ref{cor:injdim-M*-Gor-criteria}.(2).
\end{proof}

\begin{corollary}\label{cor:freeness-self-dual-injdim}
	Let $M$ be an $R$-module such that $\depth(M)=d$, $ \Ext_R^j(M,M) = 0 $ for all $1 \le j \le d$ and $\injdim_R(\Hom_R(M,M)) < \infty$. Then, $M$ is free, and $R$ is Gorenstein.
\end{corollary}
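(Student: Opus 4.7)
The plan is to apply Theorem~\ref{thm:Hom-injdim-finite-consequence} directly with $N=M$. The hypotheses translate cleanly: the assumption $\depth(M)=d$ forces $M$ to be nonzero (a zero module would have infinite depth, and $d$ is finite since $R$ is local), so $M$ and $N:=M$ are both nonzero $R$-modules with $\depth(N)=d$. The Ext-vanishing hypothesis $\Ext_R^j(M,M)=0$ for $1\le j\le d$ is exactly the hypothesis $\Ext_R^i(M,N)=0$ for $1\le i\le d$ needed in Theorem~\ref{thm:Hom-injdim-finite-consequence}, and $\Hom_R(M,N)=\Hom_R(M,M)$ has finite injective dimension by assumption.

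Invoking Theorem~\ref{thm:Hom-injdim-finite-consequence} then yields two conclusions: first, that $M$ is free; second, that $N=M$ itself has finite injective dimension. Since $M$ is free and nonzero, we can write $M\cong R^r$ for some $r\ge 1$, so $R$ is a direct summand of $M$. It follows that $\injdim_R(R)\le\injdim_R(M)<\infty$, which is precisely the statement that $R$ is Gorenstein.

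There is essentially no obstacle here: the result is a specialization of Theorem~\ref{thm:Hom-injdim-finite-consequence} to the self-Hom situation, with the Gorenstein conclusion being an immediate consequence of freeness of a nonzero module plus finite injective dimension of that module. The only minor point to verify carefully is the non-vanishing of $M$, which is handled by the depth convention.
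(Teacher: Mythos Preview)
Your proof is correct and follows essentially the same approach as the paper: both apply Theorem~\ref{thm:Hom-injdim-finite-consequence} with $N=M$, noting that $\depth(M)=d$ forces $M\neq 0$, to obtain that $M$ is free and $\injdim_R(M)<\infty$, and then conclude that $R$ is Gorenstein. The only difference is that you spell out the final step (writing $M\cong R^r$ with $r\ge 1$ so that $\injdim_R(R)<\infty$), whereas the paper simply states the conclusion.
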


\begin{proof}
	The condition $\depth(M)=d$ particularly ensures that $M$ is nonzero. By Theorem~\ref{thm:Hom-injdim-finite-consequence}, $M$ is free and $\injdim_R(M) < \infty$. So $R$ is Gorenstein.
\end{proof}

We obtain the following criteria for a module to be free over a Gorenstein local ring in terms of vanishing of Ext and projective dimension of Hom.

\begin{corollary}\label{cor:Gor-ARC}
	Suppose that $R$ is Gorenstein. For an $R$-module $M$, the following are equivalent:
	\begin{enumerate}[\rm (1)]
		\item $M$ is free.
		\item $\Hom_R(M,M)$ is free, and $\Ext_R^j(M,M)=0$ for all $ 1 \le j \le d$.
		\item $\Hom_R(M,M)$ has finite projective dimension, and
		\begin{center}
			$ \Ext_R^i(M,R) = \Ext_R^j(M,M) = 0 $ for all $ 1 \le i \le d $ and $ 1 \le j \le d $.
		\end{center}
	\end{enumerate}
\end{corollary}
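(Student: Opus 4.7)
The plan is to reduce both nontrivial directions $(3)\Rightarrow(1)$ and $(2)\Rightarrow(1)$ to Corollary~\ref{cor:freeness-self-dual-injdim}; the forward implications $(1)\Rightarrow(2)$ and $(1)\Rightarrow(3)$ are immediate, since $M\cong R^n$ makes $\Hom_R(M,M)\cong R^{n^2}$ free (hence of projective dimension zero) while killing all positive-degree Ext out of $M$. In both nontrivial directions the hypothesis already supplies $\Ext_R^j(M,M)=0$ for $1\le j\le d$, and since $R$ is Gorenstein the finite projective dimension of $\Hom_R(M,M)$ upgrades to finite injective dimension. So the only remaining input needed by Corollary~\ref{cor:freeness-self-dual-injdim} is the depth equality $\depth_R M=d$.

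For $(3)\Rightarrow(1)$ this depth equality follows immediately from local duality: with $\omega_R=R$, the isomorphism $\Ext_R^i(M,R)\cong \Hom_R(H^{d-i}_\fm(M),E_R(k))$ translates $\Ext_R^i(M,R)=0$ for $1\le i\le d$ into $H^j_\fm(M)=0$ for $0\le j\le d-1$, giving $\depth_R M=d$; then Corollary~\ref{cor:freeness-self-dual-injdim} concludes.

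For $(2)\Rightarrow(1)$ the depth equality is the main obstacle. My plan is a two-spectral-sequence comparison on $\mathbb{H}^\bullet_\fm\mathbf{R}\Hom_R(M,M)$. One spectral sequence
\[
E_2^{p,q}=H^p_\fm\Ext_R^q(M,M)\Longrightarrow\mathbb{H}^{p+q}_\fm\mathbf{R}\Hom_R(M,M)
\]
collapses sharply under our hypotheses: the strip $1\le q\le d$ vanishes, and the $q=0$ row $H^p_\fm(R^t)$ is concentrated at $p=d$ because $R$ is Gorenstein. A diagonal count then forces $\mathbb{H}^n_\fm\mathbf{R}\Hom_R(M,M)=0$ for every $n<d$. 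The companion spectral sequence from the same Cartan--Eilenberg bicomplex has $E_2^{p,q}=\Ext_R^p(M,H^q_\fm(M))$ with the same abutment. If $m:=\depth_R M<d$, then at total degree $m$ the corner $E_2^{0,m}=\Hom_R(M,H^m_\fm(M))$ is a permanent cycle: incoming differentials originate in the zero strip $p<0$, while outgoing differentials land in $\Ext_R^r(M,H^{m-r+1}_\fm(M))=0$ since $m-r+1<m$ for $r\ge 2$. But $H^m_\fm(M)$ is a nonzero Artinian module whose socle contains $k$, so the composite $M\twoheadrightarrow M/\fm M\twoheadrightarrow k\hookrightarrow H^m_\fm(M)$ makes $\Hom_R(M,H^m_\fm(M))$ nonzero, contradicting $\mathbb{H}^m_\fm\mathbf{R}\Hom_R(M,M)=0$. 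Hence $\depth_R M\ge d$, and Corollary~\ref{cor:freeness-self-dual-injdim} delivers $M$ free. The spectral-sequence collapse and the permanent-cycle argument in this paragraph are where I expect the subtlety to lie; everything else in the corollary is a packaging of results already in hand.
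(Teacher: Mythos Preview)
Your argument is correct. For $(3)\Rightarrow(1)$ you and the paper do the same thing: over the Gorenstein ring $R$, the vanishing $\Ext_R^i(M,R)=0$ for $1\le i\le d$ forces $M$ to be MCM (the paper cites \cite[3.5.11]{BH93}, which is exactly your local-duality computation), and then one converts finite projective dimension of $\Hom_R(M,M)$ to finite injective dimension and invokes Corollary~\ref{cor:freeness-self-dual-injdim}.

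The difference is in how the cycle is closed. The paper proves $(2)\Rightarrow(3)$ rather than $(2)\Rightarrow(1)$ directly: it cites the proof of \cite[Lemma~4.1]{AY98} to obtain $\depth M=\depth\Hom_R(M,M)$ from the hypothesis $\Ext_R^{1\le j\le d}(M,M)=0$, so freeness of $\Hom_R(M,M)$ immediately gives $\depth M=d$, hence $M$ is MCM and $\Ext_R^{1\le i\le d}(M,R)=0$. Your two-spectral-sequence comparison on $\mathbf{R}\Gamma_{\fm}\mathbf{R}\Hom_R(M,M)$ is, in effect, a self-contained proof of that same depth equality, and the permanent-cycle analysis at $E_2^{0,m}$ is sound (the \v{C}ech direction is bounded, so convergence is automatic; the socle argument showing $\Hom_R(M,H^m_{\fm}(M))\ne0$ is fine once one notes $M\ne0$, the case $M=0$ being trivial). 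What the paper's route buys is brevity via an existing reference; what yours buys is independence from \cite{AY98} at the price of the bicomplex bookkeeping.
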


\begin{proof}
	The implication (1) $\Rightarrow$ (2) is trivial.
	
	(2) $\Rightarrow$ (3): In view of the proof of \cite[Lemma~4.1]{AY98}, we have $\depth(M) = \depth(\Hom_R(M,M))$. Since $R$ is Gorenstein and $\Hom_R(M,M)$ is free, it follows that $M$ is MCM, and hence $ \Ext_R^i(M,R) = 0 $ for all $ 1 \le i \le d $.
	
	(3) $\Rightarrow$ (1): Since $R$ is Gorenstein, $M$ is MCM by \cite[3.5.11]{BH93}. Moreover, for an $R$-module $L$, $ \projdim_R(L) $ is finite if and only if $ \injdim_R(L) $ is finite, cf.~\cite[3.1.25]{BH93}. Therefore $\injdim_R(\Hom_R(M,M))$ is finite. Hence, by Corollary~\ref{cor:freeness-self-dual-injdim}, $M$ is free.
\end{proof}

Next we provide an affirmative answer to the question whether the Auslander-Reiten conjecture holds true if $\Hom_R(M,M)$ has finite injective dimension.

\begin{theorem}\label{thm:main-2}
	Let $M$ be a nonzero $R$-module such that $ \Ext_R^i(M,R) = 0$ and $ \Ext_R^j(M,M) = 0 $		for all $ 1 \le i \le 2d+1 $ and $ 1 \le j \le \max\{1, d-1\} $. Suppose that $\Hom_R(M,M)$ has finite injective dimension. Then, $M$ is free, and $R$ is Gorenstein.
\end{theorem}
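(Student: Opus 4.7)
The plan is to reduce to Theorem~\ref{thm:Hom-injdim-finite} (applied with $N = M$) by deriving the missing hypothesis $\depth M = d$ from the strong Ext vanishing over $R$. First, since $\Hom_R(M,M)$ is nonzero (it contains the identity endomorphism of $M$) and of finite injective dimension, Bass' conjecture (Theorem~\ref{Bass}) gives that $R$ is Cohen-Macaulay; Remark~\ref{rmk:Hom-nonzero} combined with $\Ext_R^i(M,R) = 0$ for $1 \le i \le 2d+1$ further ensures $M^* \ne 0$. The case $d = 0$ is handled directly by Theorem~\ref{thm:Hom-injdim-finite-consequence} with $N = M$: in the Artinian setting the finite injective dimension hypothesis on $\Hom_R(M,M)$ is automatic and the required Ext vanishing range $1 \le i \le d$ is empty, so one immediately concludes that $M$ is free and of finite injective dimension, whence $R$ is Gorenstein.

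Assume now $d \ge 1$. Once $\depth M = d$ is established, Theorem~\ref{thm:Hom-injdim-finite} with $N = M$ applies: the hypothesis $\Ext_R^j(M,M) = 0$ for $1 \le j \le \max\{1,d-1\}$ covers the needed range $1 \le j \le d-1$, so the conclusion gives $M$ MCM of finite injective dimension and $M \cong \Gamma_\fm(M) \oplus R^r$. Since an MCM module over a $d$-dimensional Cohen-Macaulay ring with $d \ge 1$ has positive depth, $\Gamma_\fm(M) = 0$ and hence $M \cong R^r$ is free; then $\Hom_R(M,M) \cong R^{r^2}$ inherits finite injective dimension, which forces $R$ itself to have finite injective dimension, i.e., $R$ is Gorenstein.

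The main obstacle is therefore establishing $\depth M = d$ when $d \ge 1$, and the role of the hypothesis $\Ext_R^i(M,R) = 0$ for $1 \le i \le 2d+1$ is precisely to supply it. Dualizing a minimal free resolution $F_\bullet \to M$ yields an exact sequence
\[
0 \to M^* \to F_0^* \to F_1^* \to \cdots \to F_{2d+1}^* \to Y \to 0,
\]
exhibiting $M^*$ as a $(2d+2)$-nd syzygy module; iterated application of the depth lemma forces $M^*$ to be MCM. To transfer this depth property back to $M$, I would employ the Auslander-Bridger exact sequence
\[
0 \to \Ext_R^1(\tr M, R) \to M \to M^{**} \to \Ext_R^2(\tr M, R) \to 0
\]
together with the shift isomorphisms $\Ext_R^{i+2}(\tr M, R) \cong \Ext_R^i(M^*, R)$ for $i \ge 1$, aiming to show that $M$ is $d$-torsionfree in Auslander's sense, hence a $d$-th syzygy, hence MCM. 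An alternative route---which I suspect may be more direct---is to show instead that $M^*$ itself has finite injective dimension, leveraging the MCM-ness of $M^*$ against the finite injective dimension of $\Hom_R(M,M)$ via a tensor-Hom adjunction or a spectral sequence connecting $\Hom_R(F_\bullet, R)$ and $\Hom_R(F_\bullet, M)$; one would then invoke Corollary~\ref{cor:injdim-M*-Gor-criteria}(2) (whose Ext hypothesis $\Ext_R^i(M,R)=0$ for $1 \le i \le d$ is already satisfied) to conclude that $R$ is Gorenstein and $M$ is free in one step. The hard part will be exactly this bridging step, where the specific numerics $2d+1$---sufficient to make $M^*$ a high-enough syzygy---should be decisive.
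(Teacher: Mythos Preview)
Your overall plan is sound \emph{conditionally}: if you could establish $\depth M = d$, then Theorem~\ref{thm:Hom-injdim-finite} with $N=M$ would indeed finish the argument exactly as you describe. The $d=0$ case is also fine (though your remark that finite injective dimension is ``automatic'' over an Artinian ring is false---it is equivalent to $R$ being Gorenstein---this does no harm, since $\injdim_R\Hom_R(M,M)<\infty$ is part of the hypothesis).

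The genuine gap is precisely the step you flag as the ``hard part'': neither of your two proposed routes to $\depth M = d$ works. For the Auslander--Bridger route, you correctly observe that $M^*$ is MCM (indeed a high syzygy), but to conclude that $M$ is $d$-torsionfree you need $\Ext_R^i(\tr M,R)=0$ for $1\le i\le d$, and via the shift $\Ext_R^{i+2}(\tr M,R)\cong\Ext_R^i(M^*,R)$ this requires $\Ext_R^j(M^*,R)=0$ for $1\le j\le d-2$. Knowing $M^*$ is MCM gives this only when $R$ is Gorenstein---which is what you are trying to prove. Likewise, the equivalence ``$d$-torsionfree $\Leftrightarrow$ $d$-th syzygy'' that you invoke is not automatic; it holds, e.g., when $\gdim_R M<\infty$ or $R$ is Gorenstein, and again you do not have either. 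For the second route, there is no visible mechanism linking $\injdim_R\Hom_R(M,M)<\infty$ to $\injdim_R M^*<\infty$: the natural map $M^*\otimes_R M\to\Hom_R(M,M)$ is an isomorphism only when $M$ is free, and no spectral sequence of the sort you mention produces such a conclusion without already knowing $M$ is MCM or $R$ is Gorenstein. In short, the numerics $2d+1$ make $M^*$ a high syzygy, but that alone does not reflect back to the depth of $M$ over a non-Gorenstein CM ring.

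The paper proceeds along an entirely different line and never establishes $\depth M=d$ as an intermediate step. It argues by induction on $d$. For $d=0$ a direct Krull--Schmidt/Matlis argument is used. For $d=1$ there are two subcases; when $\depth M=0$ (the case your strategy cannot reach) the proof builds a mapping cone from the short exact sequences arising from $\Hom_R(F_\bullet,M)$, uses the splitting forced by $\Ext_R^1(M,M)=0$ to get a Krull--Schmidt comparison $N\oplus Z\cong M^{n_0+n_1}$, then localizes at minimal primes (invoking the $d=0$ case) to compute ranks and show $(\Omega^2M)_\fp=0$ for every minimal $\fp$, whence $\Omega^2M=0$ and $\pd_R M\le 1$; the hypothesis $\Ext_R^1(M,R)=0$ then gives freeness. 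For $d\ge 2$ the induction hypothesis makes $M$ locally free on the punctured spectrum, and an external result (\cite[Theorem~1.2(2)]{KOT21}) closes the argument. The bound $2d+1$ is used so that after localization the induction hypothesis applies at every nonmaximal prime.
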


\begin{proof}
	Let $\mathbb{F}_M : \; \cdots \stackrel{\partial_3}{\longrightarrow} R^{n_2} \stackrel{\partial_2}{\longrightarrow} R^{n_1} \stackrel{\partial_1}{\longrightarrow} R^{n_0} \stackrel{\partial_0}{\longrightarrow} M \to 0$
%	\begin{equation}\label{reso}
%		\cdots \stackrel{\partial_3}{\longrightarrow} R^{n_2} \stackrel{\partial_2}{\longrightarrow} R^{n_1} \stackrel{\partial_1}{\longrightarrow} R^{n_0} \stackrel{\partial_0}{\longrightarrow} M \to 0
%	\end{equation}
	be a minimal free resolution of $M$, and set $\Omega^iM := \Image\partial_i$ for each $i\ge0$.
	As $\Ext_R^1(M,M)=0$, an exact sequence
%	$0 \rightarrow \Hom_R(M,M) \stackrel{f}{\rightarrow} M^{n_0} \stackrel{g}{\rightarrow} M^{n_1} \stackrel{h}{\rightarrow} M^{n_2}$
	\begin{equation*}
		0 \longrightarrow \Hom_R(M,M) \stackrel{f}{\longrightarrow} M^{n_0} \stackrel{g}{\longrightarrow} M^{n_1} \stackrel{h}{\longrightarrow} M^{n_2}
	\end{equation*}
	is induced.	Putting $ N := \coker f = \Image g $ and $ L := \coker g = \Image h $, we have the following exact sequences:
	\begin{align}
	  	& 0 \longrightarrow \Hom_R(M,M) \stackrel{f}{\longrightarrow} M^{n_0} \stackrel{e}{\longrightarrow} N \longrightarrow 0, \label{s.e.s-alpha}\\
	  	& 0 \longrightarrow N \longrightarrow M^{n_1} \longrightarrow L \longrightarrow 0 \label{s.e.s-beta}
	\end{align}
%	$$
%		\alpha: 0\to\Hom_R(M,M)\xrightarrow{f}M^{n_0}\xrightarrow{e}N\to0 \quad \mbox{ and } \quad
%		\beta: 0\to N\to M^{n_1}\to L\to0.
%	$$
	Note that $N$ is isomorphic to $ \Hom_R(\Omega(M), M) $. As $\Hom_R(M,M)$ is nonzero and of finite injective dimension, by Theorem~\ref{Bass}, $R$ is CM.
	
	We prove the theorem by induction on $ d $.
	
	(1) First, we deal with the case $ d = 0 $. We may assume that $M$ is indecomposable. The module $ \Hom_R(M, M) $ is nonzero and injective. Hence $ \Hom_R(M, M) $ is isomorphic to a finite direct sum of copies of $ E := E_R(k) $. The map $f$ is a split monomorphism. Since $R$ is henselian and $M$ is indecomposable, the Krull--Schmidt theorem yields that $M\cong E$. Hence $\Hom_R(M,M)\cong\Hom_R(E,E)\cong R$. As $\Hom_R(M,M)$ is injective, the Artinian ring $R$ is Gorenstein and thus $ M \cong E \cong R $.
	
	(2) Second, we handle the case $d=1$. We may assume that $R$ is complete, and $M$ is indecomposable. As $R$ is complete, it admits a canonical module $ \omega $.
	
	(2a) We start with the case $ \depth(M) > 0 $. In this case, $M$ is MCM, and so are $ \Hom_R(M,M) $ and $ N \cong \Hom_R(\Omega(M),M) $ as
	\begin{center}
		$ \depth( \Hom_R(X,M) ) \ge \inf \{ 2, \depth M \} > 0 $
	\end{center}
	for any finitely generated $R$-module $X$. It follows that $ \Hom_R(M,M) $ is isomorphic to a finite direct sum of copies of $\omega$, cf.~\cite[3.3.28]{BH93}. The exact sequence \eqref{s.e.s-alpha} splits since it is identified with an element of $\Ext_R^1(N,\Hom_R(M,M))=0$. As $R$ is henselian and $M$ is indecomposable, the Krull--Schmidt theorem implies that $ M \cong \omega $.	We get $ \Hom_R(M, M) \cong R $, and see that $R$ is Gorenstein, and $ M \cong R $.
	
	(2b) Next we consider the case $ \depth M = 0 $. From \eqref{s.e.s-alpha},	an exact sequence
	\begin{equation}\label{s.e.s-Ext}
		\Ext_R^1(L, M^{n_0}) \stackrel{\phi}{\longrightarrow} \Ext_R^1(L, N) \longrightarrow \Ext_R^2(L, \Hom_R(M,M))
	\end{equation}
	is induced, where $ \phi = \Ext_R^1(L,e) $.	The short exact sequence \eqref{s.e.s-beta} can be identified with an element $\beta$ of $\Ext_R^1(L,N)$. As $\injdim_R(\Hom_R(M,M))=d=1<2$, the module $\Ext_R^2(L,\Hom_R(M,M))$ vanishes.	Hence the map $\phi$ in \eqref{s.e.s-Ext} is surjective.	So there exists an element $\gamma\in\Ext_R^1(L,M^{n_0})$ such that $ \phi(\gamma) = \beta $. We obtain a commutative diagram
	$$
	\xymatrixrowsep{8mm} \xymatrixcolsep{8mm}
	\xymatrix{
		& 0 \ar[d] & 0 \ar[d] \\
		& \Hom_R(M,M) \ar@{=}[r]\ar[d]_f & \Hom_R(M,M)\ar[d] \\
		\gamma:0\ar[r] & M^{n_0}\ar[r]\ar[d]_e & Z\ar[r]\ar[d] & L\ar[r]\ar@{=}[d] & 0\\
		\beta:0\ar[r] & N\ar[r]\ar[d] & M^{n_1}\ar[r]\ar[d] & L\ar[r] & 0\\
		& 0 & 0 }
	$$
	with exact rows and columns. Taking the mapping cone of the above chain map $ \gamma \to \beta $, we get an exact sequence
	\begin{equation}\label{s.e.s-from-cone}
		0 \longrightarrow M^{n_0} \longrightarrow N \oplus Z \longrightarrow M^{n_1} \longrightarrow 0,
	\end{equation}
	which is identified with an element of $\Ext_R^1(M^{n_1},M^{n_0})\cong\Ext_R^1(M,M)^{n_1n_0}=0$.	Thus the exact sequence \eqref{s.e.s-from-cone} splits, and an isomorphism $ N \oplus Z \cong M^{n_0+n_1} $ follows. As $R$ is henselian and $M$ is indecomposable, we have $N\cong M^m$ for some $m\ge0$. Set $ r := \type(M) = \dim_k\Hom_R(k,M) > 0 $; recall that $ \depth M = 0 $. There are isomorphisms
	\begin{align*}
		k^{rm} \cong \Hom_R(k,M^m) \cong \Hom_R(k,N) & \cong \Hom_R (k, \Hom_R(\Omega(M), M)) \\
		& \cong \Hom_R (k \otimes_R \Omega(M), M) \cong k^{n_1 r},
	\end{align*}
	whence $ m = n_1 $.	Applying $ k \otimes_R (-) $ to the exact sequence \eqref{s.e.s-alpha}:
	\begin{equation}\label{s.e.s-alpha-2nd}
		0 \longrightarrow \Hom_R(M,M) \stackrel{f}{\longrightarrow} M^{n_0} \longrightarrow M^{n_1} \longrightarrow 0,
	\end{equation}
	we observe that $n_0\ge n_1$.
	
	Fix a minimal prime ideal ${\fp}$ of $R$. If $M_\fp=0$, then of course $M_\fp$ is $R_\fp$-free.
	If	$M_\fp\neq 0$, then applying the induction hypothesis to the Artinian local	ring $R_\fp$ shows that $M_\fp$ is $R_\fp$-free. Thus, in any case, we have	$M_\fp \cong R_\fp^t$ for some $t\ge0$.
	Localization of \eqref{s.e.s-alpha-2nd} at ${\fp}$ gives rise to an exact sequence
	$$
		0 \longrightarrow \Hom_{R_{\fp}}(M_{\fp},M_{\fp}) \stackrel{f_{\fp}}{\longrightarrow} M_{\fp}^{n_0} \longrightarrow M_{\fp}^{n_1} \longrightarrow 0,
	$$
	and we get $tn_0=\rank_{R_{\fp}}M_{\fp}^{n_0}=\rank_{R_{\fp}}\Hom_{R_{\fp}}(M_{\fp},M_{\fp})+\rank_{R_{\fp}}M_{\fp}^{n_1}=t^2+tn_1$.
	Hence $t\in\{0,n_0-n_1\}$. Localizing the resolution $\mathbb{F}_M$, there is an exact sequence
	$$
		0 \longrightarrow (\Omega^2M)_{\fp} \longrightarrow R_{\fp}^{n_1} \longrightarrow R_{\fp}^{n_0} \longrightarrow M_{\fp} \longrightarrow 0,
	$$
	while $M_{\fp}\cong R_{\fp}^t$.
	An isomorphism $(\Omega^2M)_{\fp}\cong R_{\fp}^{n_1-n_0+t}$ follows.
	We have $n_1-n_0+t=0$ when $ t = n_0 - n_1 $, while $ n_1 - n_0 + t = n_1-n_0 \le 0 $ when $t=0$.
	Therefore, $ (\Omega^2M)_{\fp} = 0 $ for every minimal prime ideal ${\fp}$ of $R$.
	Since $\Omega^2(M)$ is a torsion submodule of the torsion-free module $R^{n_1}$, we have $\Omega^2(M)=0$.
	It follows that $ \pd_R M \le 1 < \infty $.	By assumption, $\Ext_R^i(M,R)=0$ for all $1\le i\le 2d+1$.
	In general, when $\pd_RM$ is finite, it is equal to the supremum of integers $i$ such that $\Ext_R^i(M,R)\ne0$.
	Consequently, the module $M$ is free, so is $\Hom_R(M,M)$, and therefore $R$ is Gorenstein.
	
	(3) Finally, we consider the case $d\ge2$. Fix a nonmaximal prime ideal ${\fp}$ of $R$.	Applying the induction hypothesis to $R_\fp$, we see that $M_\fp$ is $R_{\fp}$-free.	Note that $\max\{1,d-1\}=d-1$. We can use \cite[Theorem~1.2.(2)]{KOT21} to observe that $M$ is $R$-free, and so is $\Hom_R(M,M)$, whence $R$ is Gorenstein.
\end{proof}

\begin{corollary}\label{cor:ARC}
	Let $R$ be a commutative Noetherian ring. Let $M$ be a $($finitely generated$)$ $R$-module such that $ \Hom_R(M,M) $ has finite injective dimension. Then the Auslander-Reiten conjecture holds true for $M$.
\end{corollary}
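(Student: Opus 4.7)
The plan is to mimic the strategy used for Corollary~\ref{cor:M-star-ARC}: reduce to the local setting by localization at prime ideals, and then invoke Theorem~\ref{thm:main-2} in place of Corollary~\ref{cor:injdim-M*-Gor-criteria}.(2). More precisely, assuming $\Ext_R^i(M,M\oplus R)=0$ for all $i\ge 1$, I want to prove that $M$ is projective, which is equivalent to showing that $M_\fp$ is $R_\fp$-free for every prime ideal $\fp$ of $R$.

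Fix such a prime $\fp$ and pass to the local Noetherian ring $R_\fp$ with the finitely generated module $M_\fp$. The hypotheses of Theorem~\ref{thm:main-2} all survive this localization. Since $M$ is finitely generated and Ext commutes with localization, $\Ext_{R_\fp}^i(M_\fp,R_\fp)=0$ and $\Ext_{R_\fp}^j(M_\fp,M_\fp)=0$ hold for every $i,j\ge 1$, which is far stronger than the bounds $1\le i\le 2d+1$ and $1\le j\le\max\{1,d-1\}$ required with $d=\dim R_\fp$. Similarly, since $M$ is finitely generated there is a natural isomorphism $\Hom_R(M,M)_\fp\cong\Hom_{R_\fp}(M_\fp,M_\fp)$, and because finite injective dimension is preserved under localization (a standard fact, since a bounded injective resolution localizes to a bounded injective resolution), we get $\injdim_{R_\fp}(\Hom_{R_\fp}(M_\fp,M_\fp))<\infty$.

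With the hypotheses of Theorem~\ref{thm:main-2} verified, I would split into two trivial cases: if $M_\fp=0$, then of course $M_\fp$ is $R_\fp$-free; otherwise Theorem~\ref{thm:main-2} applies directly and yields that $M_\fp$ is $R_\fp$-free (and, as a bonus, that $R_\fp$ is Gorenstein). Since $\fp$ was arbitrary, $M$ is projective, proving the Auslander-Reiten conjecture for $M$. There is no substantive obstacle at this stage: all the work has been absorbed into Theorem~\ref{thm:main-2}, and this corollary is essentially a packaging step, parallel to the route taken in Corollary~\ref{cor:M-star-ARC}.
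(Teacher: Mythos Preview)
Your proposal is correct and follows essentially the same route as the paper's own proof: reduce to the local case by localizing at an arbitrary prime, note that the Ext-vanishing and finite injective dimension of $\Hom_R(M,M)$ localize, and then apply Theorem~\ref{thm:main-2}. The only difference is that you spell out the localization details and the trivial case $M_\fp=0$, whereas the paper simply says ``we may assume that $R$ is local'' and invokes Theorem~\ref{thm:main-2} directly.
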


\begin{proof}
	Let $ \Ext_R^{>0}(M, M \oplus R) = 0 $. We show that $M$ is projective. It is equivalent to showing that the $R_\fp$-module $M_\fp$ is free for each prime ideal $\fp$ of $R$. Replacing $R$ and $M$ with $R_\fp$ and $M_\fp$ respectively, we may assume that $R$ is local. Thus the desired statement follows from Theorem~\ref{thm:main-2}.
\end{proof}

In view of Theorem~\ref{thm:main-2}, we may wonder if $\Hom_R(M,M)$ having finite injective
dimension implies that $M$ has finite projective dimension. This is not true even when $R$ is a Gorenstein normal local ring. Particularly, it shows that the hypothesis on the vanishing of Ext in Theorem~\ref{thm:main-2} cannot be omitted.

\begin{example}
	Let $R$ be a Gorenstein normal local ring, and $I$ be a nonzero ideal of $R$. Then $\Hom_R(I,I) \cong R$. Therefore $\Hom_R(I,I)$ has finite injective dimension, but $I$ does not necessarily have finite projective dimension. For example, one may consider $R=k[[x,y,z]]/(x^2-yz)$ with $k$ a field, and $I=\fm=(x,y,z)$.
\end{example}

We close this section with the following natural question.

\begin{question}\label{ques:Hom-inj-dim-finite-R-Gor}
	If there exists a nonzero $R$-module $M$ such that $\Hom_R(M,M)$ has finite injective dimension, then is $R$ Gorenstein?
\end{question}

%\begin{example}
%	Let $R$ be a Gorenstein normal local ring. Let $I$ be a nonzero ideal of $R$. It is a well known fact that $\Hom_R(I,I) \cong R$. Indeed, $\Hom_R(I,I)$ is isomorphic to the subring $(I:_KI)= \{ a\in K : aI \subseteq I \}$ of the quotient field $K$ of $R$. As $\Hom_R(I,I)$ is finitely generated as an $R$-module, $(I:_KI)$ is integral over $R$, hence $(I:_KI)=R$. So $\Hom_R(I,I)\cong R$ has finite injective dimension, but $I$ does not necessarily have finite projective dimension. For example, one may consider $R=k[[x,y,z]]/(x^2-yz)$ and $I=\fm=(x,y,z)$.
%\end{example}

\section{Characterizations of Gorenstein local rings}

In this section, we provide a number of characterizations of Gorenstein local rings in terms of finite injective dimension of certain Hom. We start with answering Question~\ref{ques:Hom-inj-dim-finite-R-Gor} affirmatively in some special cases.

\begin{proposition}
	Let $M$ be an $R$-module. Suppose that {\rm (i)} $M$ is torsion-free, {\rm (ii)} $M$ is locally free in codimension $1$, {\rm (iii)} $M$ has a rank, and {\rm (iv)} $\rank(M)$ is invertible in $R$. If $\Hom_R(M,M)$ has finite injective dimension, then $R$ is Gorenstein.
\end{proposition}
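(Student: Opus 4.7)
The plan is to exhibit $R$ as an $R$-module direct summand of $\End_R(M)$, so that $\injdim_R R\le\injdim_R\End_R(M)<\infty$ forces $R$ to be Gorenstein. The splitting of the canonical map $\eta:R\to\End_R(M)$, $a\mapsto a\cdot\id_M$, will be realized as $(1/r)\tr$ for a trace homomorphism $\tr:\End_R(M)\to R$ satisfying $\tr(\id_M)=r$, where $r=\rank_R M$; this uses hypothesis~(iv) that $r$ is a unit.

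To construct $\tr$, first invoke Theorem~\ref{Bass}: since $\End_R(M)$ has finite injective dimension, $R$ is Cohen--Macaulay. Set $d=\dim R$. For every prime $\fp$ of $R$ with $\htt\fp\le 1$, hypotheses~(ii) and (iii) give $M_\fp\cong R_\fp^r$, so $\End_R(M)_\fp\cong M_r(R_\fp)$ carries the conjugation-invariant matrix trace $\tr_\fp$, which commutes with further localization and satisfies $\tr_\fp(\id_M)=r$. When $d\le 1$ every prime has height $\le 1$, so $M$ is free and $\End_R(M)\cong R^{r^2}$ admits the ordinary matrix trace directly. When $d\ge 2$, I glue the family $\{\tr_\fp\}_{\htt\fp\le 1}$ into a global $\tr\in N:=\Hom_R(\End_R(M),R)$ by observing that $N$ is a dual, hence a second syzygy, so $\depth_{R_\fp} N_\fp\ge\min\{2,\htt\fp\}$ for every $\fp$ (since each $R_\fp$ is CM of depth $\htt\fp$); thus $N$ satisfies Serre's condition $S_2$ over the $S_2$ ring $R$.

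The global $\tr$ then comes from the standard reconstruction principle: an $S_2$ module $N$ over a ring satisfying $S_2$ is recovered as $N=\Gamma(U,\widetilde N)$, where $U=\{\fp\in\Spec R:\htt\fp\le 1\}$, so compatible sections of $\widetilde N$ on $U$ lift to global sections of $\widetilde N$. The identity $\tr(\id_M)=r$ follows because both sides agree after localizing at each $\fp\in U$ and $R=\Gamma(U,\widetilde R)$ detects this equality. Retracting $\eta$ by $(1/r)\tr$ then exhibits $R$ as a direct summand of $\End_R(M)$ and finishes the argument. The main obstacle is this $S_2$-gluing step in the $d\ge 2$ case, i.e., rigorously verifying that a compatible family of sections of $\widetilde N$ on $U$ glues to a global section; I would execute this either via the sheaf-theoretic reconstruction just cited or equivalently through the vanishing of the local cohomology $H^i_Z(N)$ for $i\le 1$ with $Z=\Spec R\setminus U$, which follows from the depth estimate on $N$ since $Z$ has codimension $\ge 2$.
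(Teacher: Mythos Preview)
Your strategy coincides with the paper's: invoke Bass's conjecture to get $R$ Cohen--Macaulay, then split the structure map $R\to\End_R(M)$ via the trace divided by $r=\rank M$, so that $\injdim_R R<\infty$. The paper simply cites \cite[A.2 and A.5]{HL04} for this splitting, while you carry out the trace construction by hand; your $S_2$-extension argument is in essence the content of those results of Huneke--Leuschke.

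One technical point to repair: the set $U=\{\fp:\htt\fp\le 1\}$ is generally not open in $\Spec R$, so neither $\Gamma(U,\widetilde N)$ nor $H^i_Z(N)$ with $Z=\Spec R\setminus U$ makes sense as written. Replace $U$ by the free locus $V=\{\fp:M_\fp\text{ is }R_\fp\text{-free}\}$, which \emph{is} open (for a finitely generated module over a Noetherian ring) and contains all height~$\le 1$ primes by hypothesis~(ii). On $V$ the sheaf $\widetilde M|_V$ is locally free of rank $r$, so the usual trace of vector-bundle endomorphisms yields a well-defined section $\tr_V\in\Gamma(V,\widetilde N)$; then $\Spec R\setminus V$ is closed of codimension $\ge 2$ and your $S_2$/local-cohomology extension argument applies verbatim to produce the global $\tr$.
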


\begin{proof}
	Since $M$ is nonzero, so is $\Hom_R(M,M)$. Then Theorem~\ref{Bass} yields that $R$
	is CM. It follows from \cite[A.2 and A.5]{HL04} that $R$ is a direct summand of $\Hom_R(M,M)$. Hence $R$ has finite injective dimension, i.e., $R$ is Gorenstein.
\end{proof}

\begin{proposition}\label{prop:injdim-Hom-Gor}
	Suppose that $R$ admits a module $M$ such that
	\begin{center}
		$\depth(\Hom_R(M,M))=\depth(R)$ and $\injdim_R(\Hom_R(M,M)) < \infty$.
	\end{center}
	Then $R$ is Gorenstein.
\end{proposition}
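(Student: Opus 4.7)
Set $H := \Hom_R(M,M)$. My plan is (1) to invoke Bass's theorem to get $R$ Cohen-Macaulay with $H$ a maximal Cohen-Macaulay $R$-module, (2) to reduce to the complete case and identify $H$ as a direct sum of copies of the canonical module $\omega$, and (3) to use the $R$-algebra structure on $H$, together with Krull-Schmidt, to force $R\cong\omega$, and hence $R$ Gorenstein.

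For the first two steps: since $\depth H=\depth R$ rules out $M=0$, the module $H$ is a nonzero $R$-module of finite injective dimension, so Theorem~\ref{Bass} yields $R$ Cohen-Macaulay with $\depth R=d$. Consequently $\depth H=d$, so $H$ is MCM. All hypotheses and the Gorenstein conclusion are preserved under completion, so I may assume $R$ is complete and admits a canonical module $\omega$; the classical structure theorem \cite[3.3.28]{BH93} then yields $H\cong\omega^n$ for some $n\ge 1$. Moreover, since $\ann_R(M)\cdot H=0$ and $\omega^n$ is faithful (as $\omega$ is faithful over a Cohen-Macaulay local ring), one obtains $\ann_R(M)=0$, so the unit map $R\to H$ sending $r\mapsto r\cdot\mathrm{id}_M$ is an injection of $R$-modules.

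The main obstacle is step (3): extracting from the $R$-algebra structure on $H\cong\omega^n$ that $R\cong\omega$. The clean target is to exhibit $R$ as an $R$-module direct summand of $H$; then, since $\omega$ is indecomposable over the complete local ring $R$ (its $R$-endomorphism ring $\End_R(\omega)=R$ is local, hence has no nontrivial idempotents) and $R$ is likewise indecomposable, Krull-Schmidt forces $R$ to be isomorphic to one of the summands $\omega$, giving $R\cong\omega$. Constructing an $R$-linear retraction $H\twoheadrightarrow R$ splitting the unit $R\hookrightarrow H$ is the technical crux. A natural first attempt is the left regular representation $H\hookrightarrow\End_R(H)\cong M_n(R)$, under which $R\hookrightarrow H$ becomes the diagonal $R\hookrightarrow M_n(R)$: the matrix trace gives a retraction when $n$ is a unit in $R$. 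Handling the case where $n$ is a non-unit will likely require a more delicate argument, perhaps exploiting the duality $\Hom_R(-,\omega)$ or specific features of $\End_R(M)$ as an $R$-algebra.
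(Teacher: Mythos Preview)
Your steps (1) and (2) match the paper's proof. The gap is in step (3): you correctly see that the trace retraction $M_n(R)\to R$ only works when $n$ is invertible, and you leave the remaining case open. That case is genuinely the whole difficulty, and your suggested avenues (canonical duality, algebra structure of $\End_R(M)$) do not obviously close it. So as written the proof is incomplete.

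The paper resolves (3) by a different and characteristic-free maneuver: rather than trying to split $R$ out of $H=\Hom_R(M,M)\cong\omega^n$, it splits $H$ out of a free module. The point is the natural pair of maps
\[
f:M\to M\otimes_R H,\quad f(x)=x\otimes\id_M,\qquad
g:M\otimes_R H\to M,\quad g(x\otimes h)=h(x),
\]
which satisfy $g\circ f=\id_M$, so $M$ is a direct summand of $M\otimes_R H$. Applying $\Hom_R(-,M)$ and using Hom--tensor adjunction gives that $H=\Hom_R(M,M)$ is a direct summand of
\[
\Hom_R(M\otimes_R H,\,M)\;\cong\;\Hom_R(H,\Hom_R(M,M))\;=\;\Hom_R(\omega^n,\omega^n)\;\cong\;R^{n^2}.
\]
Now Krull--Schmidt over the complete local ring forces $\omega\cong R$, hence $R$ is Gorenstein. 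This avoids any hypothesis on the invertibility of $n$, and it is precisely the ``specific feature of $\End_R(M)$'' you were looking for: the identity $\id_M\in H$ furnishes the universal section $M\to M\otimes_R H$, not merely a unit map $R\to H$.
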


\begin{proof}
	Replacing $R$ with its completion, we may assume that $R$ is complete. By Theorem~\ref{Bass}, $R$ is CM, and hence $\Hom_R(M,M)$ is MCM.	Since $R$ is complete CM, it admits a canonical module $\omega$, and we have that $\Hom_R(M,M) \cong \omega^n$	for some $n \ge 1$, cf.~\cite[3.3.28]{BH93}. It follows that
	\begin{align*}\label{Hom-free}
	R^{n^2} \cong \Hom_R(\omega^n,\omega^n)
	&\cong \Hom_R(\Hom_R(M,M),\Hom_R(M,M))\\
	&\cong \Hom_R(M \otimes_R \Hom_R(M,M), M).%\nonumber
	\end{align*}
	Consider the $R$-module homomorphisms
	\begin{center}
		$f: M\to M\otimes_R \Hom_R(M,M)$ \; and \; $g:M\otimes_R \Hom_R(M,M) \to M$
	\end{center}
	defined by $f(x)=x \otimes \id_M$ and $g(x \otimes h) = h(x)$ respectively. Clearly, the composition $g \circ f = \id_M$, and hence $f$ is a split monomorphism. Therefore $M$ is a direct summand of $M\otimes_R \Hom_R(M,M)$. This implies that $\omega^n \cong \Hom_R(M,M)$ is a direct summand of $\Hom_R(M\otimes_R \Hom_R(M,M), M) \cong R^{n^2}$. So, by the Krull--Schmidt theorem, $\omega \cong R$, and hence $R$ is Gorenstein.
\end{proof}

%Compare the following result with Corollary~\ref{cor:freeness-self-dual-injdim}. We can deduce the Gorensteinness of $R$ by only assuming $\Ext^j_R(M,M)=0$ for all $1\le j\le d$.
%
%\begin{corollary}\label{cor:prop:injdim-Hom-Gor}
%	Let $M$ be an $R$-module such that $\depth(M)=d$, $\Ext^j_R(M,M)=0$ for
%	all $1\le j\le d$, and $\injdim_R(\Hom_R(M,M))<\infty$. Then $R$ is Gorenstein.
%\end{corollary}
%
%\begin{proof}
%	The assumption $\depth(M)=d$ especially says that $M$ is nonzero. By Proposition~\ref{prop:Goto-Takahashi}.(1)(i), we have $\depth(\Hom_R(M,M))=\depth(M)=d$. Therefore it follows from Proposition~\ref{prop:injdim-Hom-Gor} that $R$ is Gorenstein.
%\end{proof}

\begin{para}
	A partial positive answer to Question~\ref{ques:Hom-inj-dim-finite-R-Gor} is provided in \cite[Cor.~4.5]{CGP21}, where it is shown that if there exists a nonzero $R$-module $M$ of depth $\ge d - 1$ such that the injective dimensions of $M$, $\Hom_R(M,M)$ and $\Ext_R^1(M,M)$ are finite, then the projective dimension of $M$ is finite, and $R$ is Gorenstein.
\end{para}

\begin{para}\label{para:char-Gor}
	Let $R$ be CM. In \cite[Thm.~3.1]{Ulr84}, Ulrich gave a criterion for $R$ to be Gorenstein: If there is an $ R $-module $ L $ of positive rank such that $ 2 \nu(L) > e(R) \rank(L) $ and $ \Ext_R^{1 \le i \le d}(L,R) = 0 $, then $ R $ is Gorenstein. Hanes-Huneke and Jorgensen-Leuschke gave some analogous criteria in \cite[Thms.~2.5 and 3.4]{HH05} and
%	: If $ R $ has a canonical module $ \omega $, and $ L $ is a CM $ R $-module such that for some $ (R \oplus L) $-regular sequence $ \underline{x} $ of length $ \dim(L) $, the length $ \lambda_R(\mathfrak{m} L/\underline{x}L) < \nu(L) $ and $ \Ext_R^i(L, R) = 0  $ for $ 1 + \dim(R) - \dim(L) \le i \le \dim(R) + \nu(\omega) $, then $ R $ is Gorenstein; see 
	\cite[Thms.~2.2 and 2.4]{JL07} respectively. Recently, Lyle and Monta\~{n}o \cite[Thm.~D]{LM20} showed that a generically Gorenstein CM local ring $R$ that has a canonical module is Gorenstein if there is an MCM $R$-module $L$ such that $ \Ext_R^{1 \le i \le d+1}(L,R) = 0 $ and $e_R(L) \le 2 \nu(L)$.
\end{para}

\begin{para}\label{para:char-reg}
	Like Theorems~\ref{PS} and \ref{Foxby}, having finite injective dimension of certain modules also ensures that the base ring is regular. For example, $R$ is regular \iff its residue field $k$ has finite injective dimension, see, e.g., \cite[3.1.26]{BH93}. More generally, it is shown in \cite[Thm.~3.7]{GGP} that $R$ is regular if and only if some syzygy $\Omega_R^n(k)$ $(n \ge 0)$ has a nonzero direct summand of finite injective dimension.
	%	counterpart of the result of Auslander, Buchsbaum and Serre for injective dimension can be stated as $R$ is regular \iff 
\end{para}

Inspired by the results mentioned in \ref{para:char-Gor}, \ref{para:char-reg} and Theorems~\ref{PS} and \ref{Foxby}, we obtain the following new characterizations of Gorenstein local rings in terms of vanishing of certain Ext and finite injective dimension of Hom.

\begin{theorem}\label{thm:characterizations-Gor}
	The following statements are equivalent:
	\begin{enumerate}[\rm (1)]
		\item $R$ is Gorenstein.
		\item $R$ admits a module $M$ such that $\Ext_R^i(M,R)=0$ for all $ 1 \le i \le d-1 $ and $ M^* $ is nonzero and of finite injective dimension.
		\item $R$ admits a nonzero module $M$ such that $\Ext_R^i(M,R)=\Ext_R^j(M,M)=0$ for all $1\le i\le 2d+1$ and $1\le j\le\max\{1,d-1\}$, and $\injdim_R(\Hom_R(M,M)) < \infty$.
		\item $R$ admits a module $M$ such that $\depth(M)=d$, $\Ext_R^j(M,M)=0$ for all $1 \le j \le d$ and $\injdim_R(\Hom_R(M,M)) < \infty$.
		\item $R$ admits a module $M$ such that
		\begin{center}
			$ \depth(\Hom_R(M,M)) = \depth(R) $ and $ \injdim_R(\Hom_R(M,M)) < \infty $.
		\end{center}
	\end{enumerate}
\end{theorem}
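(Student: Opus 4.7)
The plan is to establish the chain of implications as a bookkeeping exercise consolidating the results already proved in Section 2 together with Proposition~\ref{prop:injdim-Hom-Gor}. The key observation is that (1) trivially implies each of (2)--(5) by taking $M=R$: when $R$ is Gorenstein, the module $R$ satisfies $\Ext_R^i(R,R)=0$ for all $i\ge 1$, $R^*=R$ has finite injective dimension, and $\Hom_R(R,R)=R$ has depth equal to $\depth(R)$ and finite injective dimension. So the substance is in the reverse directions (2)$\Rightarrow$(1), (3)$\Rightarrow$(1), (4)$\Rightarrow$(1), and (5)$\Rightarrow$(1).

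For (2)$\Rightarrow$(1), I would apply Corollary~\ref{cor:injdim-M*-Gor-criteria}.(1) directly: its hypothesis that $\Ext_R^i(M,R)=0$ for $1\le i\le d-1$ and that $M^*$ is nonzero and of finite injective dimension is precisely condition (2), and its conclusion includes that $R$ is Gorenstein. For (3)$\Rightarrow$(1), the hypotheses on vanishing of $\Ext_R^i(M,R)$ and $\Ext_R^j(M,M)$ in the required ranges together with finite injective dimension of $\Hom_R(M,M)$ match verbatim the hypotheses of Theorem~\ref{thm:main-2}, whose conclusion asserts that $R$ is Gorenstein (and $M$ is free). For (4)$\Rightarrow$(1), the hypotheses are exactly those of Corollary~\ref{cor:freeness-self-dual-injdim}, which gives the Gorenstein conclusion. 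Finally, (5)$\Rightarrow$(1) is the content of Proposition~\ref{prop:injdim-Hom-Gor}.

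Since every nontrivial implication reduces to a single citation of a result already proved above, there is no serious obstacle to the proof once those results are in hand; the only care required is to verify that condition (3) genuinely produces the ``nonzero'' hypothesis needed for Theorem~\ref{thm:main-2} (it does, by assumption) and that in (4) the condition $\depth(M)=d$ forces $M\ne 0$, so Corollary~\ref{cor:freeness-self-dual-injdim} applies. The proof can therefore be written very compactly by recording (1)$\Rightarrow$(2), (3), (4), (5) with $M=R$ and then listing the four reverse implications together with their references.
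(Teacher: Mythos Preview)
Your proposal is correct and matches the paper's own proof essentially verbatim: the paper also observes that (1) implies each of (2)--(5) by taking $M=R$, and then derives the reverse implications (2)$\Rightarrow$(1), (3)$\Rightarrow$(1), (4)$\Rightarrow$(1), (5)$\Rightarrow$(1) by citing Corollary~\ref{cor:injdim-M*-Gor-criteria}(1), Theorem~\ref{thm:main-2}, Corollary~\ref{cor:freeness-self-dual-injdim}, and Proposition~\ref{prop:injdim-Hom-Gor}, respectively.
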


\begin{proof}
	The implications (1) $\Rightarrow$ (2), (3), (4) and (5) are trivial as $M=R$ satisfies the respective conditions. The reverse implications (2) $\Rightarrow$ (1), (3) $\Rightarrow$ (1), (4) $\Rightarrow$ (1) and (5) $\Rightarrow$ (1) follow from \ref{cor:injdim-M*-Gor-criteria}.(1), \ref{thm:main-2}, \ref{cor:freeness-self-dual-injdim} and \ref{prop:injdim-Hom-Gor} respectively.
\end{proof}

\begin{remark}\label{rmk:Holm}
	This is known due to Holm \cite{Ho04} that if there exists a nonzero $R$-module $M$ of finite Gorenstein dimension and finite injective dimension, then $R$ is Gorenstein. It should be noted that Theorem~\ref{thm:characterizations-Gor}.(1) $\Leftrightarrow$ (2) would not follow from the result of Holm. Moreover, we can recover that if $M$ is a nonzero $R$-module such that $\gdim_R(M)=0$ and $\injdim_R(M) < \infty$, then $M^*$ satisfies the condition (2) in \ref{thm:characterizations-Gor} as $\Ext_R^i(M^*,R)=0$ for all $i\ge 1$ and $(M^*)^* \cong M$ has finite injective dimension, hence $R$ is Gorenstein.
\end{remark}

\section*{Acknowledgments}
Ghosh was supported by Start-up Research Grant (SRG) from SERB, DST, Govt.~of India with the Grant No SRG/2020/000597. Takahashi was partly supported by JSPS Grant-in-Aid for Scientific Research 19K03443.

\end{document}